\newcommand{\heis}{\mathbb{H}}
\newcommand{\hred}{\overline{\heis}}
\newtheorem{theorem}{Theorem}
\newtheorem{lemma}{Lemma}
\newtheorem{remark}{Remark}
\newtheorem{question}{Main Question}
\newtheorem{definition}{Definition}
\newtheorem{corollary}{Corollary}
\newtheorem{proposition}{Proposition}
\newtheorem{conjecture}{Conjecture}
\title[SVD for the X-ray Transforms on the Reduced Heisenberg Group, and a Two-Radius Theorem]{Singular Value Decomposition for the X-ray Transforms on the Reduced Heisenberg Group, and a Two-Radius Theorem}
	\author[S. Flynn]{Steven Flynn}
\address[S. Flynn]{University of Bath, Department of Mathematical Sciences, Bath, BA2 7AY, UK} 
\email{spf34@bath.ac.uk}
\date{May 5th, 2023}
\begin{document}
\maketitle
\begin{abstract}
	We give an explicit Singular Value Decomposition of the sub-Riemannian X-ray transform on the Heisenberg group with compact center. By studying the singular values, we obtain a two-radius theorem for integrals over sub-Riemannian geodesics. We also state intertwining properties of distinguished differential operators. We conclude with a description of ongoing work. 
\end{abstract}
\section{Introduction}
The X-ray transform assigns to a function its integrals over closed geodesics. The Heisenberg group with its sub-Riemannian structure provides a rich example of such integral functionals. Of crucial importance for practical applications of the X-ray Transform is its Singular Value Decomposition  \cite{mishraRangeCharacterizationsSingular2021}.

\section{Definitions}
\subsection{Heisenberg Group}
The Heisenberg group is $\heis=\mathbb{C}\times\mathbb{R}$ with the multiplication law
\begin{align*}
	(x+iy, t)(u+iv, s)=(x+u+i(y+v), t+s+\tfrac{1}{2}(xv-yu))
\end{align*}
The reduced Heisenberg group is the quotient of $\heis$ by a discrete subgroup of the center
\begin{align*}
	\overline{\heis}:=\heis/\Gamma, \quad \Gamma:=\{(0, k\pi)\in \mathbb{C}\times\mathbb{R}: k\in\mathbb{Z}\}.
\end{align*}
Heuristically, the harmonic analysis on the Heisenberg group is controlled by the center $Z(\heis)=\{0+0i \}\times\mathbb{R}$. Having compact center,  analyis of the reduced Heisenberg group resembles that of the circle. 
\subsection{X-ray Transform}
Let $r\in\mathbb{Q}^+$ be a positive rational number. Of interest are curves on $\overline{\heis}$ modeled by
\begin{align}
\gamma_r(s):=\left(\sqrt{r}e^{is/\sqrt{r}}, \tfrac{1}{2}\sqrt{r}s\right)\in \hred\cong \mathbb{C}\times (\mathbb{R}/\pi\mathbb{Z}).\label{geodesics1}
\end{align}
Take for granted that the set of closed unit speed \textit{sub-Riemannian} geodesics \cite{montgomeryTourSubriemannianGeometries2006} on $\hred$  is generated by left translations by $(z, t)\in \overline{\heis}$ of the curves $\gamma_r$ above, for $r\in \mathbb{Q}^+$. The requirement that $r$ is rational is analogous to the requirement that closed geodesics on the flat torus have rational slope.  If $r$ were irrational, the curve $\gamma_r$ would not be closed, so we ignore that case.

Indeed, for $r=a/b$ with $a, b\in\mathbb{N}^+$ coprime,
\begin{align*}
\gamma_r \text{ is a closed curve in }\hred \text{ with period } 2\pi \sqrt{ab}.
\end{align*} 
After one period of $\gamma_r$ with $r=a/b$ for coprime $(a, b)$,
\begin{itemize}
	\item [i] $a\in \mathbb{N}^+$ counts the number of times  $\gamma_r$ winds around the $S^1$ component of $\hred$, 
	\item [ii] $b\in\mathbb{N}^+$ counts the number of counterclockwise rotations made by $\gamma_r$ around its central axis.
\end{itemize}
Call $a$ the \textit{vertical winding number}, and $b$ the \textit{horizontal winding number} of $\gamma_r$ in $\hred$.

\begin{definition} For vertical and horizontal winding numbers $a$ and $b$, and $r=a/b$ the X-ray transform  
	\begin{align*}
	I_{r}: C_c^\infty(\overline{\heis})\to C_c^\infty(\overline{\heis})
	\end{align*}
of a compactly supported smooth function $f\in C_c^\infty(\overline{\heis})$ is
	\begin{align}
	I_{r}f(z, t)=\int_0^{2\pi \sqrt{ab}}f\left((z, t)\gamma_r(s)\right)ds, \quad (z, t)\in \overline{\heis}.\label{xray}
	\end{align}
	We also write $If(r; z, t):=I_{r}f(z, t)$.
\end{definition}
\begin{remark}
	The operator $I$ is the Heisenberg analog of the X-ray transform on a torus. It is related to the X-ray transform on the full Heisenberg group \cite{flynnInjectivityHeisenbergXray2021} by a type of torus-projection operator (as in \cite{ilmavirtaTorusComputedTomography2020}). 
\end{remark}
That $I_r$ is well-defined is straightforward. By the Cauchy Schwartz Inequality and compactness of the domain of integration, $I_r$ extends to a bounded operator on $L^2(\overline{\heis})$. It is a simple matter to extend $I_r$ to other function spaces, but we focus on the $L^2$ theory. 

The primary concern is injectivity of the X-ray transform. Written as in equation \eqref{xray}, the X-ray transform is a special case of the Pompeiu Transform \cite{agranovskyInjectivityPompeiuTransform1994}. However, to the author's knowlegdge, the special case of integration over left-translates a closed sub-Riemannian geodesic has not been considered.

\section{Spectral Decomposition of ${I}_r$, $I_r^*$ and $I_r^* I_r$}
The spectral decomposition of the X-ray transform $I_r$ has a continuous part and a discrete part respecting the  orthogonal decomposition
\begin{align}
	L^2(\overline{\heis})\cong L^2(\mathbb{C})\oplus {^0L^2(\overline{\heis})}. 
\end{align}
The substances in the orthogonal decomposition above are
\begin{align*}
L^2(\mathbb{C})\cong\,  &\{f\in L^2(\overline{\heis}): f(z, t)=f(z, 0),\; \forall(z, t)\in\overline{\heis} \}\\
^0L^2(\overline{\heis}):=\, &\{f\in L^2(\overline{\heis}): \smallint_0^{\pi} f(z, t)dt=0, \; \forall z\in\mathbb{C} \}.
\end{align*}

\begin{proposition}\label{decomp}
	${I}_r$ preserves the orthogonal decomposition above. i.e{,} 
	\begin{align*}
	{{I}_r}: L^2(\mathbb{C})\to L^2(\mathbb{C})
	&&{{I}_r}: {^0}L^2(\overline{\heis})\to {^0}L^2(\overline{\heis}).
	\end{align*}
\end{proposition}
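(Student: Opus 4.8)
The plan is to recognise $I_r$ as a right convolution operator and to identify the splitting in the statement with the decomposition of $L^2(\overline{\heis})$ into isotypic components for the action of the (compact) center, which is $\{0\}\times(\mathbb{R}/\pi\mathbb{Z})$ and which acts on $\overline{\heis}$ by translating the $t$-coordinate. Writing $\mu_r$ for the pushforward under $\gamma_r$ of Lebesgue measure on $[0, 2\pi\sqrt{ab}]$, definition \eqref{xray} reads $I_r f(z,t) = \int f\big((z,t)h\big)\,d\mu_r(h) = (f * \mu_r)(z,t)$, a right convolution. Right convolution commutes with all left translations, in particular with left translations by central elements, so $I_r$ maps each subspace on which the center acts by a fixed character into itself. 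Now $L^2(\mathbb{C})$ is precisely the subspace on which the center acts trivially (equivalently, the zeroth Fourier mode in $t$), while ${}^0L^2(\overline{\heis})$ is its orthogonal complement, the sum of the isotypic subspaces for all the nontrivial characters; hence both are preserved.

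To turn this into an elementary self-contained argument I would first compute $(z,t)\gamma_r(s)$ from the group law: with $z = x+iy$ and $w(s) := \sqrt{r}\,e^{is/\sqrt{r}} =: u(s) + i v(s)$ one finds $(z,t)\gamma_r(s) = \big(z + w(s),\ t + \Psi(z,s)\big)$, where $\Psi(z,s) := \tfrac12\sqrt{r}\,s + \tfrac12\big(x\,v(s) - y\,u(s)\big)$ does not depend on $t$. Thus $t$ enters the integrand of \eqref{xray} only through the translation $t \mapsto t + \Psi(z,s)$ of the last coordinate.

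For the first inclusion: if $f \in L^2(\mathbb{C})$ then $f\big((z,t)\gamma_r(s)\big) = f\big(z + w(s), 0\big)$ is independent of $t$, so $I_r f(z,t)$ is too; combined with the $L^2$-boundedness of $I_r$ noted after the definition, this gives $I_r f \in L^2(\mathbb{C})$. For the second: if $f \in {}^0L^2(\overline{\heis})$ I would compute $\int_0^\pi I_r f(z,t)\,dt$, interchange the $t$- and $s$-integrations by Fubini (legitimate since $f$ is compactly supported and both integrals are over finite intervals), and then use that for each fixed $s$ the substitution $\tau = t + \Psi(z,s)$ is a measure-preserving bijection of $\mathbb{R}/\pi\mathbb{Z}$, so that $\int_0^\pi f\big(z + w(s),\, t + \Psi(z,s)\big)\,dt = \int_0^\pi f\big(z + w(s),\tau\big)\,d\tau = 0$ by the defining property of ${}^0L^2(\overline{\heis})$. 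Hence $\int_0^\pi I_r f(z,t)\,dt = 0$ and $I_r f \in {}^0L^2(\overline{\heis})$.

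The point worth emphasising, rather than any genuine obstacle, is the observation that $t$ occurs in $(z,t)\gamma_r(s)$ only as an additive shift, so that averaging over the central circle commutes with the geodesic average; everything else — the interchange of integrals and the translation-invariance of Haar measure on $\mathbb{R}/\pi\mathbb{Z}$ — is routine. The conceptual reformulation, that $I_r$ is a right convolution and therefore diagonalised by the central-character decomposition, is what makes the spectral analysis of the following sections natural.
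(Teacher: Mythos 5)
Your argument is correct; note that the paper states Proposition~\ref{decomp} without proof (calling the surrounding facts ``straightforward''), so there is nothing to compare against line by line. Both of your routes are sound and consistent with the paper's later viewpoint that $I_r$ is a group convolution operator $f\mapsto \mu_r * f$, hence commutes with the left action of the compact center and so preserves its isotypic decomposition, of which $L^2(\mathbb{C})$ and ${}^0L^2(\overline{\heis})$ are the trivial part and its orthogonal complement; the only cosmetic caveat is that a general $f\in{}^0L^2(\overline{\heis})$ need not be compactly supported, so the Fubini step should be justified for a.e.\ $z$ (or by first arguing on $C_c^\infty$ and then using the $L^2$-boundedness of $I_r$ together with the closedness of the two subspaces, as you already do for the first inclusion).
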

The continuous part of $I_r$ is its restriction to $L^2(\mathbb{C})$, which is essentially the mean value transform on the plane $M^R: L^2(\mathbb{C})\to L^2(\mathbb{C})$,
\begin{align*}
	M^Rf(z):=\frac{1}{2\pi}\int_0^{2\pi}f(z+Re^{i\theta})d\theta, \quad f\in L^2(\mathbb{C}).
\end{align*}

\begin{theorem} The restriction of ${I}_r$ to $L^2(\mathbb{C})$ is a scalar multiple of the mean value transform. In particular
	\begin{align*}
		{I}_r f (z)=2\pi \sqrt{ab}\,M^{\sqrt{r}}f(z), \quad f\in L^2(\mathbb{C}).
	\end{align*}
	Therefore
	\begin{align*}
		I_rf(z)=\frac{\sqrt{ab}}{2\pi}\int_{\mathbb{R}^2} J_0(R|\xi|)\widehat{f}(\xi)e^{iz\cdot \xi}d\xi.
	\end{align*}
\end{theorem}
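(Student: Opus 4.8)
The plan is to prove the two assertions in sequence: first identify the restriction of $I_r$ to $L^2(\mathbb{C})$ with a multiple of the circular mean $M^{\sqrt r}$ by a direct computation with the group law, and then deduce the Fourier‑multiplier formula from the classical Bessel integral representation of the planar circular mean. Throughout, $R=\sqrt r$, so the two displays in the statement refer to the same radius.

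For the first part I would take $f\in L^2(\mathbb{C})$, so that $f(z,t)=f(z)$ for all $(z,t)$, and read off from the Heisenberg multiplication law that the $\mathbb{C}$-component of $(z,t)\gamma_r(s)$ is $z+\sqrt r\,e^{is/\sqrt r}$, independently of $t$ and of the choice of representative modulo $\Gamma$. Thus
\begin{align*}
 I_r f(z,t)=\int_0^{2\pi\sqrt{ab}} f\!\left(z+\sqrt r\,e^{is/\sqrt r}\right)ds,
\end{align*}
which is already independent of $t$, so this also re‑proves Proposition~\ref{decomp} on the $L^2(\mathbb{C})$ summand. Substituting $\theta=s/\sqrt r$ turns the integral into $\sqrt r\int_0^{2\pi b}f(z+\sqrt r e^{i\theta})\,d\theta$, using $\sqrt{ab}/\sqrt r=b$; since $\theta\mapsto e^{i\theta}$ is $2\pi$‑periodic this equals $b\sqrt r\int_0^{2\pi}f(z+\sqrt r e^{i\theta})\,d\theta=2\pi\,b\sqrt r\;M^{\sqrt r}f(z)$, and $b\sqrt r=b\sqrt{a/b}=\sqrt{ab}$ gives the first displayed identity.

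For the second part I would substitute Fourier inversion, $f(z)=(2\pi)^{-2}\int_{\mathbb{R}^2}\widehat f(\xi)e^{iz\cdot\xi}\,d\xi$ in the convention under which the stated prefactor appears, into $M^{\sqrt r}f$. Since $f\in C_c^\infty$, $\widehat f$ is Schwartz, so Fubini lets me exchange the angular and frequency integrals, leaving the inner factor $\tfrac{1}{2\pi}\int_0^{2\pi}e^{i\sqrt r\,e^{i\theta}\cdot\xi}\,d\theta$. This average is rotation‑invariant in $\xi$, so taking $\xi=(|\xi|,0)$ reduces it to $\tfrac{1}{2\pi}\int_0^{2\pi}e^{i\sqrt r\,|\xi|\cos\theta}\,d\theta=J_0(\sqrt r\,|\xi|)$, the standard integral representation of $J_0$. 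Hence $M^{\sqrt r}f(z)=(2\pi)^{-2}\int_{\mathbb{R}^2}J_0(\sqrt r\,|\xi|)\widehat f(\xi)e^{iz\cdot\xi}\,d\xi$, and multiplying by $2\pi\sqrt{ab}$ yields the stated formula.

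I do not expect a serious obstacle; the only place demanding care is the constant bookkeeping — matching the horizontal winding number $b$ (the number of times the circle is retraced in one period) together with the radius $\sqrt r$ to the normalization $2\pi\sqrt{ab}$, and keeping the Fourier convention consistent so the prefactor comes out as $\sqrt{ab}/(2\pi)$. A minor point worth recording is the Fubini justification, which is immediate from the $C_c^\infty$ hypothesis (or by density in $L^2$).
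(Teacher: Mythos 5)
Your proposal is correct and follows essentially the same route as the paper: the same change of variables $s\mapsto s\sqrt r$ and periodicity argument for the identity $I_rf=2\pi\sqrt{ab}\,M^{\sqrt r}f$, and the same reduction of the angular average to $J_0(\sqrt r\,|\xi|)$ via the integral representation \eqref{bessel} for the Fourier-multiplier formula (the paper computes $\widehat{M^Rg}$ directly rather than expanding $f$ by Fourier inversion, but this is the identical calculation).
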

Here $J_0$ is the zeroth order Bessel function defined in \eqref{bessel}. 
\begin{proof}
	For $f\in L^2(\mathbb{C})$, 
	\begin{align*}
		{I}_r|_{L^2(\mathbb{C})}f(z, t)
		&=\int_0^{2\pi \sqrt{ab}}f\left((z, t)(\sqrt{r}e^{i s/\sqrt{r}}, \tfrac{1}{2}\sqrt{r}s)\right)ds
		=\int_0^{2\pi \sqrt{ab}}f\left(z+e^{i \sqrt{r}s/\sqrt{r}}, 	t+\tfrac{1}{2}\sqrt{r}s+\tfrac{1}{2}\sqrt{r}\textnormal{Im}\left(\overline{z}e^{is/\sqrt{r}}\right)\right)ds\\
		&=\int_0^{2\pi \sqrt{ab}}f(z+\sqrt{r}e^{i s/\sqrt{r}})ds=\sqrt{r}\int_0^{2\pi b}f(z+\sqrt{r}e^{i s})ds=(2\pi b)\sqrt{r} M^{\sqrt{r}}f(z)=2\pi\sqrt{ab}M^{\sqrt{r}}f(z), 
	\end{align*}
	Furthermore
	\begin{align*}
		\int_{\mathbb{R}^2}M^Rg(z)e^{-iz\cdot \xi}dz
		=\int_{\mathbb{R}^2}\frac{1}{2\pi}\int_0^{2\pi}g(z+Re^{i\theta})e^{-iz\cdot\xi}d\theta dz
		=\frac{1}{2\pi}\int_0^{2\pi}e^{iRe^{i\theta}\cdot \xi}d\theta \int_{\mathbb{R}^2}g(z)e^{-iz\cdot \xi}dz=J_0(R|\xi|)\widehat{g}(\xi)
	\end{align*}
	The result follows. 
\end{proof}
We now focus on the Singular Value Decomposition of the discrete part of $I_r$.  
\begin{theorem}[Spectral Decomposition of ${I}_r$ on ${^0L^2(\overline{\heis})}$]
	Fix $r=a/b\in\mathbb{Q}^+$. Then for all $j,k=0, 1, 2,...$
	\begin{align*}
			{I}_{r} \psi_{jk}^{n}=\sqrt{ab}\, \delta_\mathbb{Z}(rn)\, c(rn, j)\psi_{j+r|n|, k}^{n} 
	\end{align*}

	where
	\begin{align*}
	c(m, j)=2\pi \sqrt{\frac{j!}{(j+m)!}}m^{m/2}e^{-(i+1)\pi m/2}L_j^{m}\left(m\right), \quad m\in \mathbb{N}
	\end{align*}
	with
	$c(-m, j)=c(m, j)$ and 
	\begin{align*}
		\delta_{\mathbb{Z}}(m)=
		\begin{cases}
			1 & m\in \mathbb{Z}\\
			0 & m \not\in \mathbb{Z}
		\end{cases}.
	\end{align*}
\end{theorem}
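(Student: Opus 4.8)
The plan is to compute directly with the matrix-coefficient description of the $\psi_{jk}^{n}$. Recall that the representations of $\heis$ descending to $\hred$ are the one-dimensional ones (responsible for the $L^2(\mathbb{C})$ summand, already handled) together with the Schr\"odinger representations $\pi_n$, $n\in\mathbb{Z}\setminus\{0\}$, with central character $t\mapsto e^{2int}$ (Schr\"odinger parameter $\lambda=2n$) acting on $L^2(\mathbb{R})$; the $\psi_{jk}^{n}$ are, up to a fixed normalizing constant $N_n$, the matrix coefficients $g\mapsto\langle\pi_n(g)h_j,h_k\rangle$ against the ($n$-scaled) Hermite basis $\{h_j\}_{j\ge 0}$. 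Since $\pi_n(g\,\gamma_r(s))=\pi_n(g)\,\pi_n(\gamma_r(s))$, interchanging the integral in \eqref{xray} with the inner product gives
\begin{align*}
I_r\psi_{jk}^{n}(g)=N_n\big\langle \pi_n(g)\,A_n h_j,\;h_k\big\rangle,\qquad A_n:=\int_0^{2\pi\sqrt{ab}}\pi_n(\gamma_r(s))\,ds,
\end{align*}
so the whole statement reduces to computing $A_n h_j\in L^2(\mathbb{R})$.

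The key step is to strip the planar rotation out of $\gamma_r$. The map $(z,t)\mapsto(e^{i\theta}z,t)$ is an automorphism of $\heis$ fixing the center, so Stone--von Neumann provides a unitary $U(\theta)$ (the metaplectic lift of the planar rotation) with $\pi_n(e^{i\theta}z,0)=U(\theta)\,\pi_n(z,0)\,U(\theta)^{*}$, and $U(\theta)$ is diagonal in the Hermite basis: $U(\theta)h_j=\nu(\theta)\,e^{-i\operatorname{sgn}(n)\theta(j+\tfrac12)}h_j$ with $\nu(\theta)$ unimodular. Writing $\gamma_r(s)=(\sqrt r\,e^{is/\sqrt r},\tfrac12\sqrt r\,s)$ as in \eqref{geodesics1} and peeling the central coordinate off as the phase $e^{i\lambda\sqrt r s/2}=e^{in\sqrt r s}$, the cocycle $\nu$ and the half-integer shifts cancel between the two Hermite slots, leaving
\begin{align*}
\big\langle\pi_n(\gamma_r(s))h_j,h_l\big\rangle=e^{\,i\omega_{jl}s}\,\big\langle\pi_n(\sqrt r,0)h_j,h_l\big\rangle,\qquad \omega_{jl}:=n\sqrt r+\operatorname{sgn}(n)\tfrac{j-l}{\sqrt r}.
\end{align*}
Integrating over one period and using $\sqrt{r\,ab}=a$ and $\sqrt{ab/r}=b$ gives $\omega_{jl}\cdot 2\pi\sqrt{ab}=2\pi\big(na+\operatorname{sgn}(n)(j-l)b\big)\in 2\pi\mathbb{Z}$ for every $j,l$; hence $\int_0^{2\pi\sqrt{ab}}e^{i\omega_{jl}s}\,ds$ equals $2\pi\sqrt{ab}$ when $\omega_{jl}=0$ and vanishes otherwise. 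The condition $\omega_{jl}=0$ reads $l-j=r|n|$, which has a solution with $l\ge 0$ exactly when $rn\in\mathbb{Z}$: this is precisely where the factor $\delta_{\mathbb{Z}}(rn)$ and the index shift $j\mapsto j+r|n|$ originate. Therefore $A_n h_j=2\pi\sqrt{ab}\,\delta_{\mathbb{Z}}(rn)\,\big\langle\pi_n(\sqrt r,0)h_j,h_{j+r|n|}\big\rangle\,h_{j+r|n|}$, and substituting back (the $N_n$ cancels) yields $I_r\psi_{jk}^{n}=2\pi\sqrt{ab}\,\delta_{\mathbb{Z}}(rn)\,\big\langle\pi_n(\sqrt r,0)h_j,h_{j+r|n|}\big\rangle\,\psi_{j+r|n|,k}^{n}$, so that $c(rn,j)=2\pi\big\langle\pi_n(\sqrt r,0)h_j,h_{j+r|n|}\big\rangle$.

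It remains to put this single surviving matrix coefficient into closed form. In the Hermite basis $\pi_n(\sqrt r,0)$ is a displacement (Weyl) operator, whose off-diagonal entries are the classical Laguerre-function matrix coefficients of the Schr\"odinger representation (the special Hermite functions); specializing the displacement to the geodesic radius $\sqrt r$ in the normalization dictated by $\hred$ turns $\langle\pi_n(\sqrt r,0)h_j,h_{j+m}\rangle$ into $\sqrt{j!/(j+m)!}\;m^{m/2}e^{-(i+1)\pi m/2}L_j^{m}(m)$ with $m=r|n|$, i.e.\ into $c(m,j)/(2\pi)$. For $n<0$ the metaplectic rotation winds with the opposite orientation, which is why the same identity persists with $r|n|$ and why the constant is recorded symmetrically as $c(-m,j)=c(m,j)$.

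The step I expect to be the genuine obstacle is this last identification: carrying every normalization through --- the Plancherel density, the $n$-dependent rescaling of the Hermite basis, the metaplectic cocycle, and the factors of $\pi$ hidden in the $\pi$-periodicity of the center of $\hred$ --- so that the coefficient emerges in exactly the stated Laguerre form, phase $e^{-(i+1)\pi m/2}$ and power $m^{m/2}$ included. By contrast, the vanishing unless $rn\in\mathbb{Z}$ and the shift by $r|n|$ fall out cleanly from orthogonality of $\{e^{iks}\}_{k\in\mathbb{Z}}$ over one period of $\gamma_r$, using only that $a$ and $b$ are integers.
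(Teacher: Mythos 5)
Your proposal is correct and follows essentially the same route as the paper: writing $I_r\psi_{jk}^{n}$ as $\langle\pi_n(\cdot)A_nh_j,h_k\rangle$ is the product formula \eqref{product} in operator form, your metaplectic extraction of the phase $e^{i\omega_{jl}s}$ is what the paper obtains from the dilation identity \eqref{dilation} together with the explicit angular and central dependence of $M_{jl}^{n}$, and both arguments conclude by orthogonality of $e^{iks}$ over one period (giving the condition $a|n|+b(j-l)=0$, hence the shift $l=j+r|n|$ and the factor $\delta_{\mathbb{Z}}(rn)$) followed by evaluation of the single surviving entry $M^{n}_{j,\,j+r|n|}(\sqrt r,0)$. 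The one step you assert rather than derive --- the closed Laguerre form of $2\pi\langle\pi_n(\sqrt r,0)h_j,h_{j+m}\rangle$ --- is likewise left to the explicit formula for $M^{h}_{jk}$ in the paper, so nothing essential is missing.
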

Here $\{\psi_{jk}^n: n\in \mathbb{Z}^*, j, k\in\mathbb{N}\}$ defined in equation \eqref{basis}, is an orthonormal Hilbert Basis of  ${^0 L^2(\overline{\heis})}$. The generalized Laguerre functions are given in \eqref{laguerre}.
\begin{proof} For $n\in \mathbb{Z}^*, j, k\in \mathbb{N}, r\in \mathbb{Q}^+$, and $(z, t)\in \overline{\heis}$,
	\begin{align*}
	{I}_{r} \psi_{jk}^{n}(z, t)
	&=\frac{\sqrt{|n|}}{\pi}{I}_{r} M_{jk}^{n}(z, t)
	=\frac{\sqrt{|n|}}{\pi}\int_0^{2\pi \sqrt{ab}}M_{jk}^{n}\left((z, t)\gamma_{r}(s)\right)ds\\
	&=\frac{\sqrt{|n|}}{\pi}\int_0^{2\pi \sqrt{ab}}\sum_{l=0}^\infty M_{jl}^{n}\left(\gamma_{r}(s)\right) M_{lk}^{n}(z, t) ds, & \text{by } \eqref{product},\\
	&=\sum_{l=0}^\infty\left[ \int_0^{2\pi \sqrt{ab}}M_{jl}^{n}\left(\gamma_{r}(s)\right)  ds\right] \psi_{lk}^{n}(z, t).
	\end{align*}
	Now 
	\begin{align*}
	\int_0^{2\pi \sqrt{ab}}M_{jl}^{n}\left(\gamma_{r}(s)\right) ds
	&=\int_0^{2\pi \sqrt{ab}}M_{jl}^{n}\left(\sqrt{r}e^{i s/\sqrt{r}}, \tfrac{1}{2}\sqrt{r}s\right)ds\\
	&=\sqrt{r}\int_0^{2\pi \sqrt{abr}}M_{jl}^{n}\left(\sqrt{r}e^{is}, \tfrac{1}{2} rs\right)ds, & \text{via } s\mapsto s\sqrt{r}\\
	&=\sqrt{r}\int_0^{2\pi b}e^{i((j-l)+r|n|)s}M_{jl}^{n}\left(\sqrt{r}, 1\right)ds,& \text{by } \eqref{dilation}\\
	&=\sqrt{ab}\int_0^{2\pi }e^{i(b(j-l)+a|n|)s}dsM_{jl}^{n}\left(\sqrt{r}, 1\right), & s\mapsto bs \\
	&=2\pi \sqrt{ab}\, \left[\delta_0\left(b(j-l)+a|n|\right)\right]M^{n}_{jl}(\sqrt{r}, 0).
	\end{align*}
	Thus
	\begin{align*}
	{I}_{r } \psi_{jk}^{n}(z, t)&=2\pi\sqrt{ab}\sum_{l=0}^\infty  \delta(j-l+r|n|)M^{n}_{jl}(\sqrt{r}, 0)\psi_{lk}^{n}(z, t)\\
	&=\delta_\mathbb{Z}(rn)(2\pi \sqrt{ab}) M^{n}_{j, j+|n|r}(\sqrt{r}, 0)\psi_{j+|n|r, k}^{n}(z, t)
	\end{align*}
as desired. 
\end{proof}

\begin{remark}
	In contrast with \cite{flynnInjectivityHeisenbergXray2021}, we see that the spectral decomposition of the X-ray transform involves the ``Bessel spectrum'' (corresponding ot the finite dimensional representations of $\overline{\heis}$). On the full Heisneberg group, this part of the spectrum has Plancherel measure zero. See \cite{follandHarmonicAnalysisPhase1989} or \cite{thangaveluHarmonicAnalysisHeisenberg1998}.  
\end{remark}
For fixed $r\in \mathbb{Q}^+$, the X-ray transform $I_r$ is a convolution operator from $L^2(\overline{\heis})$ to itself. We define the adjoint using the same measure (the Haar measure, which here is the Lebesgue measure) on the domain and target space. 
\begin{theorem}
	The formal adjoint $I_r^*: L^2(\hred)\to L^2(\hred)$ is given by
	\begin{align*}
	I_r^*f(z, t)=\int_0^{2\pi\sqrt{ab}}f\left((z, t)\gamma_r(s)^{-1}\right)ds.
	\end{align*}
\end{theorem}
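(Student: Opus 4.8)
The plan is to verify the defining relation $\langle I_r f, g\rangle_{L^2(\hred)} = \langle f, I_r^* g\rangle_{L^2(\hred)}$ directly, for $f,g \in C_c^\infty(\hred)$; since $I_r$ is bounded on $L^2$ and $C_c^\infty$ is dense, this determines $I_r^*$ on all of $L^2(\hred)$. First I would expand
\begin{align*}
\langle I_r f, g\rangle = \int_{\hred}\left(\int_0^{2\pi\sqrt{ab}} f\big((z,t)\gamma_r(s)\big)\,ds\right)\overline{g(z,t)}\,d(z,t)
\end{align*}
and apply Fubini's theorem to exchange the two integrations. This is legitimate because the integrand is jointly continuous and, $g$ being compactly supported, is supported in the finite-measure set $[0,2\pi\sqrt{ab}]\times\mathrm{supp}(g)$ on which $f$ and $g$ are bounded.

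The essential step is a change of variables in the spatial integral. For each fixed $s$ set $(w,u) := (z,t)\gamma_r(s)$, i.e.\ right-translate by $\gamma_r(s)$. In the coordinates $(x,y,t)$ on $\heis$ this map is affine with unipotent linear part, hence has Jacobian determinant $1$; equivalently, $\heis$ is a connected nilpotent --- therefore unimodular --- Lie group whose Haar measure is exactly Lebesgue measure, and this bi-invariance descends to the quotient $\hred = \heis/\Gamma$ by the central discrete subgroup $\Gamma$. Thus $d(z,t) = d(w,u)$ and $(z,t) = (w,u)\gamma_r(s)^{-1}$, which turns the inner product into
\begin{align*}
\langle I_r f, g\rangle = \int_0^{2\pi\sqrt{ab}}\int_{\hred} f(w,u)\,\overline{g\big((w,u)\gamma_r(s)^{-1}\big)}\,d(w,u)\,ds.
\end{align*}

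Finally I would reapply Fubini (now valid because $f$ is compactly supported) to bring the $s$-integral back inside, and move it through the complex conjugation, obtaining
\begin{align*}
\langle I_r f, g\rangle = \int_{\hred} f(w,u)\,\overline{\int_0^{2\pi\sqrt{ab}} g\big((w,u)\gamma_r(s)^{-1}\big)\,ds}\;d(w,u),
\end{align*}
so that $I_r^* g(w,u) = \int_0^{2\pi\sqrt{ab}} g\big((w,u)\gamma_r(s)^{-1}\big)\,ds$, as asserted. The only point with any content is the right-invariance of the measure; everything else is bookkeeping, and I expect the ``main obstacle'' to be nothing more than making the measure-invariance and Fubini steps precise --- there is no analytic subtlety.

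One may package the argument more abstractly by observing that $I_r$ is convolution on the right by the finite arclength measure $\mu_r$ carried by the closed curve $\gamma_r$, so that on a unimodular group its $L^2$-adjoint is convolution by the pushforward of $\mu_r$ under inversion $g \mapsto g^{-1}$ --- which is precisely the arclength measure carried by $s \mapsto \gamma_r(s)^{-1}$ and yields the displayed formula.
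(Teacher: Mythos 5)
Your proof is correct. The paper offers no written proof of this theorem --- only the preceding remark that $I_r$ is a convolution operator and that the Haar measure on $\hred$ is Lebesgue measure --- and your argument (Fubini plus unimodularity of the nilpotent group, or equivalently the observation that the adjoint of right-convolution by the curve measure $\mu_r$ is convolution by its pushforward under inversion) is precisely the standard verification that fills in that omission, matching the paper's implicit reasoning.
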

\begin{remark}
	Note that $\gamma_r(s)^{-1}$ is not a geodesics for the left-invariant metric. In particular $I_r$ is not formally self-adjoint. This fact contrasts with the case for the X-ray transform on the torus with a fixed directional parameter, 
\end{remark}
\begin{corollary}[Spectral Decomposition of ${I}_r$ on ${^0L^2(\overline{\heis})}$]
	Fix $r=a/b\in\mathbb{Q}^+$. Then for all $j,k=0, 1, 2,...$
	\begin{align*}
			{I}^*_{r} \psi_{jk}^{n}=\sqrt{ab}\, \delta_\mathbb{Z}(rn)\, \overline{c(n, j-r|n|)}\psi_{j-r|n|, k}^{n}, 
	\end{align*}
with $c(m, j)$ defined above for $j\geq 0$ and $c(m, j)=0$ for $j<0$. 
\end{corollary}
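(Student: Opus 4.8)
The plan is to deduce the spectral decomposition of $I_r^*$ from that of $I_r$ by Hilbert space duality, rather than re-running the integral computation with $\gamma_r(s)^{-1}$ in place of $\gamma_r(s)$. Since $I_r$ is bounded on $L^2(\overline{\heis})$ and preserves the orthogonal decomposition $L^2(\overline{\heis})\cong L^2(\mathbb C)\oplus{^0L^2(\overline{\heis})}$ by Proposition \ref{decomp}, so does $I_r^*$, and it suffices to work on ${^0L^2(\overline{\heis})}$, where $\{\psi_{jk}^{n}:n\in\mathbb Z^*,\ j,k\in\mathbb N\}$ is an orthonormal basis. Writing the $L^2$ pairing as $\langle f,g\rangle=\int_{\overline{\heis}}f\,\overline g$, I would expand
\begin{align*}
I_r^*\psi_{jk}^{n}=\sum_{n'\in\mathbb Z^*}\sum_{l,m\in\mathbb N}\big\langle I_r^*\psi_{jk}^{n},\,\psi_{lm}^{n'}\big\rangle\,\psi_{lm}^{n'}
=\sum_{n'\in\mathbb Z^*}\sum_{l,m\in\mathbb N}\big\langle \psi_{jk}^{n},\,I_r\psi_{lm}^{n'}\big\rangle\,\psi_{lm}^{n'},
\end{align*}
using the defining relation $\langle I_r^* u,v\rangle=\langle u,I_r v\rangle$.

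Next I would substitute the already-proved identity $I_r\psi_{lm}^{n'}=\sqrt{ab}\,\delta_{\mathbb Z}(rn')\,c(rn',l)\,\psi_{l+r|n'|,m}^{n'}$, so that, by conjugate-linearity of $\langle\cdot,\cdot\rangle$ in the second slot and orthonormality of the basis,
\begin{align*}
\big\langle \psi_{jk}^{n},\,I_r\psi_{lm}^{n'}\big\rangle
=\sqrt{ab}\,\delta_{\mathbb Z}(rn')\,\overline{c(rn',l)}\,\big\langle \psi_{jk}^{n},\,\psi_{l+r|n'|,m}^{n'}\big\rangle
=\sqrt{ab}\,\delta_{\mathbb Z}(rn')\,\overline{c(rn',l)}\,\delta_{nn'}\,\delta_{km}\,\delta_{j,\,l+r|n'|}.
\end{align*}
The triple Kronecker delta annihilates every term except $n'=n$, $m=k$, $l=j-r|n|$; since $\delta_{nn'}\delta_{\mathbb Z}(rn')=\delta_{nn'}\delta_{\mathbb Z}(rn)$ and $r|n'|=r|n|$, the one surviving coefficient equals $\sqrt{ab}\,\delta_{\mathbb Z}(rn)\,\overline{c(rn,\,j-r|n|)}$, which yields exactly $I_r^*\psi_{jk}^{n}=\sqrt{ab}\,\delta_{\mathbb Z}(rn)\,\overline{c(rn,\,j-r|n|)}\,\psi_{j-r|n|,k}^{n}$. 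The stated symmetry $c(-m,j)=c(m,j)$ lets one replace the argument $rn$ by $r|n|=|rn|$ throughout, so the two forms of the corollary coincide.

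Two bookkeeping points remain. First, when $j<r|n|$ the index $l=j-r|n|$ is negative and the expansion simply produces no matching basis vector; this is precisely why one adopts the convention $c(m,j)=0$ for $j<0$, which makes the displayed formula uniformly valid for all $j,k\in\mathbb N$. Second, one should note that this argument uses only the spectral theorem for $I_r$ together with abstract adjointness, not the explicit integral representation of $I_r^*$ from the preceding theorem (that formula is a complementary statement). A direct derivation is possible—insert $\gamma_r(s)^{-1}=(-\sqrt r e^{is/\sqrt r},-\tfrac12\sqrt r s)$ and repeat the computation behind the spectral decomposition of $I_r$—but it forces one to invoke the inversion and conjugation symmetries of the matrix coefficients $M_{jl}^{n}$, so the duality route is shorter and cleaner. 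The only genuine hazard is clerical: tracking the conjugations and the index shift $j\mapsto j-r|n|$ (as opposed to $j\mapsto j+r|n|$ for $I_r$), along with the sign convention fixed for $\langle\cdot,\cdot\rangle$.
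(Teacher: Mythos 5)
Your duality argument is correct and is essentially the derivation the paper intends: the statement is presented as a corollary with no separate proof, and expanding $I_r^*\psi_{jk}^{n}$ in the orthonormal basis via $\langle I_r^*u,v\rangle=\langle u,I_rv\rangle$ together with the already-established action of $I_r$ is the natural route; your handling of the index shift, the $j<r|n|$ degenerate case, and the preservation of ${}^0L^2(\overline{\heis})$ are all sound. One small point: your computation produces the coefficient $\overline{c(rn,\,j-r|n|)}$, whereas the corollary as printed reads $\overline{c(n,\,j-r|n|)}$; your version is the one consistent with the subsequent formula $N_r\psi_{jk}^{n}=(ab)\,\delta_{\mathbb Z}(rn)\,|c(rn,j)|^{2}\psi_{jk}^{n}$, so the discrepancy is a typo in the stated corollary rather than an error in your argument, but you should not claim the two expressions agree "exactly" without noting this.
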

\begin{corollary}[Spectral Decomposition of $N_r$ on ${^0 L^2(\overline{\heis})}$]
	The normal operator $N_r:= I^*_rI_r$ is diagonalized by the basis $\{\psi_{jk}^n\}$:
	\begin{align*}
			N_r\psi_{jk}^{n}=(ab)\, \delta_\mathbb{Z}(rn)\, |c(rn, j)|^2\psi_{jk}^{n}. 
	\end{align*}
\end{corollary}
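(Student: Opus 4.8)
The plan is to derive the spectral decomposition of $N_r = I_r^* I_r$ by composing the two decompositions already established: the action of $I_r$ on the basis $\{\psi_{jk}^n\}$ from the ``Spectral Decomposition of $I_r$ on ${}^0L^2(\overline{\heis})$'' theorem, and the action of $I_r^*$ from the preceding corollary. Both operators are already diagonal-ish in the sense that they map each $\psi_{jk}^n$ to a scalar times a single basis element with the same $n$ and $k$, only shifting the first index $j$; so their composition will genuinely be diagonal.

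First I would apply $I_r$ to $\psi_{jk}^n$, obtaining $\sqrt{ab}\,\delta_\mathbb{Z}(rn)\,c(rn,j)\,\psi_{j+r|n|,k}^n$. Then I would apply $I_r^*$ to the result, using the corollary's formula $I_r^*\psi_{j'k}^n = \sqrt{ab}\,\delta_\mathbb{Z}(rn)\,\overline{c(n, j'-r|n|)}\,\psi_{j'-r|n|,k}^n$ with $j' = j + r|n|$. This gives $j' - r|n| = j$, so the second index shift exactly undoes the first, returning us to $\psi_{jk}^n$. Collecting scalars yields $N_r\psi_{jk}^n = (ab)\,\delta_\mathbb{Z}(rn)\,c(rn,j)\,\overline{c(n, j)}\,\psi_{jk}^n$ (the $\delta_\mathbb{Z}(rn)$ factor is idempotent). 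It remains to observe that $\overline{c(n,j)} = \overline{c(rn,j)}$ on the relevant support: indeed, when $\delta_\mathbb{Z}(rn) \neq 0$ we have $rn \in \mathbb{Z}$, and combining this with the stated symmetry $c(-m,j) = c(m,j)$, the factor $c(rn,j)\overline{c(n,j)}$ can be rewritten as $|c(rn,j)|^2$. (One should be slightly careful here: the argument of $c$ in $I_r$ is $rn$ while in $I_r^*$ the displayed argument is $n$; I would check that this is a harmless typo or that the identification $c(n,\cdot)$ there really means $c(rn,\cdot)$, consistent with the $j - r|n|$ index shift. The natural reading is that $\overline{c(rn,j)}$ is intended, making the product manifestly $|c(rn,j)|^2$.)

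The only genuine subtlety — and the step I would treat most carefully — is the bookkeeping of which basis elements are actually hit, i.e. ensuring that after the $I_r$ step the index $j + r|n|$ is a legitimate non-negative integer so that $\psi_{j+r|n|,k}^n$ is a valid basis element, and that $I_r^*$ then sends it back. Since $\delta_\mathbb{Z}(rn) = 1$ forces $r|n| \in \mathbb{N}$, the shift $j \mapsto j + r|n|$ stays in $\mathbb{N}$, and the convention $c(m,j) = 0$ for $j < 0$ introduced in the corollary guarantees the composition is well-defined and that no spurious terms appear. I would also note that this all takes place within ${}^0L^2(\overline{\heis})$, where $n \in \mathbb{Z}^*$, so $|n| \geq 1$ and the $\delta_\mathbb{Z}(rn)$ condition is the substantive constraint (it selects those $n$ divisible by the denominator $b$ of $r$).

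In short, the proof is a two-line composition once the two prior decompositions are in hand; the write-up will mostly consist of substituting one formula into the other, tracking the index shift $j \mapsto j + r|n| \mapsto j$, and invoking the symmetry of $c$ to collapse $c(rn,j)\overline{c(rn,j)}$ into $|c(rn,j)|^2$. I do not anticipate any analytic obstacle, since boundedness of $I_r$ on $L^2(\overline{\heis})$ and the orthonormal basis property of $\{\psi_{jk}^n\}$ are already granted; the content is entirely algebraic.
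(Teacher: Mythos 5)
Your proposal is correct and is exactly the intended derivation: the paper states this as an immediate corollary of composing the spectral decompositions of $I_r$ and $I_r^*$, with the index shift $j\mapsto j+r|n|\mapsto j$ cancelling as you describe. You are also right that the $\overline{c(n,\,j-r|n|)}$ in the adjoint corollary must be read as $\overline{c(rn,\,j-r|n|)}$ (as forced by $I_r^*$ being the actual adjoint of $I_r$ and by the SVD formula $s(n,j,r)=\sqrt{ab}\,\delta_{\mathbb{Z}}(rn)|c(rn,j)|$), so the product collapses to $|c(rn,j)|^2$ as claimed.
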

\begin{corollary}[Singular Value Decomposition of $I_r$] For $f\in {^0L^2(\overline{\heis})}$, 
	 $I_rf=U_r\circ D_rf$ where
	 \begin{align*}
	 	D_r\psi_{jk}^n&=s(n, j, r)\psi_{jk}^n\\
	 	U_r\psi_{jk}^n&=e^{-\pi r|n|/2}\sigma(r|n|, j)\psi_{j+r|n|, k}^n
	 \end{align*}
 where $\sigma(r|n|, j)={{\rm sgn}}(L_j^{(r|n|)}(r|n|))$ and
 \begin{align*}
 	s(n, j, r)=\sqrt{ab}\, \delta_\mathbb{Z}(rn)\, |c(rn, j)|.
 \end{align*}
\end{corollary}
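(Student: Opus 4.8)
The statement is an immediate corollary of the spectral decomposition of $I_r$ on ${}^0L^2(\overline{\heis})$ already established: we simply have to package that formula as a composition $I_r = U_r \circ D_r$ with $D_r$ positive semidefinite and $U_r$ a partial isometry. First I would recall that the earlier theorem gives
\[
I_r\psi_{jk}^n = \sqrt{ab}\,\delta_\mathbb{Z}(rn)\,c(rn,j)\,\psi_{j+r|n|,k}^n,
\]
and that the scalar $c(m,j)$ factors, from its defining formula, as a nonnegative real amplitude times a unimodular phase. Explicitly, for $m\in\mathbb{N}$,
\[
c(m,j) = 2\pi\sqrt{\tfrac{j!}{(j+m)!}}\,m^{m/2}\,L_j^{m}(m)\,e^{-(i+1)\pi m/2},
\]
so $|c(m,j)| = 2\pi\sqrt{\tfrac{j!}{(j+m)!}}\,m^{m/2}\,|L_j^{m}(m)|$ and the phase of $c(m,j)$ is the phase of $L_j^{m}(m)\,e^{-(i+1)\pi m/2}$. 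Since $L_j^{m}(m)$ is real, its phase contributes exactly $\mathrm{sgn}(L_j^{(m)}(m)) = \sigma(m,j)$, while $e^{-(i+1)\pi m/2}$ contributes the unimodular factor $e^{-i\pi m/2}$ together with the decaying real factor $e^{-\pi m/2}$. Hence
\[
c(rn,j) = |c(rn,j)|\cdot \sigma(r|n|,j)\cdot e^{-\pi r|n|/2}\cdot e^{-i\pi r|n|/2},
\]
using $c(-m,j)=c(m,j)$ so that everything depends on $r|n|$. (One should double-check the exact placement of the $e^{-i\pi r|n|/2}$ phase against the author's conventions in $U_r$; as written in the statement $U_r$ carries only $e^{-\pi r|n|/2}\sigma$, so presumably the unimodular phase has been absorbed into the basis convention or is simply suppressed, and I would align with whatever was done in the preceding corollaries.)

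**Carrying it out.** With this factorization in hand, set $s(n,j,r) = \sqrt{ab}\,\delta_\mathbb{Z}(rn)\,|c(rn,j)|$ and define $D_r\psi_{jk}^n = s(n,j,r)\psi_{jk}^n$. This $D_r$ is diagonal in the orthonormal basis $\{\psi_{jk}^n\}$ with nonnegative eigenvalues, hence a well-defined bounded positive semidefinite operator on ${}^0L^2(\overline{\heis})$ — boundedness because the $s(n,j,r)$ are uniformly bounded (this follows from boundedness of $I_r$ on $L^2$, established earlier via Cauchy–Schwarz, together with the fact that $N_r = I_r^*I_r$ has eigenvalues $(ab)\delta_\mathbb{Z}(rn)|c(rn,j)|^2 = s(n,j,r)^2$). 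Then define $U_r$ on basis vectors by $U_r\psi_{jk}^n = e^{-\pi r|n|/2}\sigma(r|n|,j)\psi_{j+r|n|,k}^n$ and extend linearly; one checks it is bounded (the map $(j,k,n)\mapsto(j+r|n|,k,n)$ is injective on the support, and $e^{-\pi r|n|/2}\sigma$ is bounded by $1$), so $U_r$ is well-defined on ${}^0L^2$. Finally, composing,
\[
U_r D_r \psi_{jk}^n = s(n,j,r)\,e^{-\pi r|n|/2}\sigma(r|n|,j)\,\psi_{j+r|n|,k}^n = \sqrt{ab}\,\delta_\mathbb{Z}(rn)\,c(rn,j)\,\psi_{j+r|n|,k}^n = I_r\psi_{jk}^n,
\]
where the middle equality is exactly the factorization of $c$ above. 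Since $\{\psi_{jk}^n\}$ is a Hilbert basis of ${}^0L^2(\overline{\heis})$ and both sides are bounded, $I_r = U_r\circ D_r$ on all of ${}^0L^2(\overline{\heis})$.

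**Where the work actually is.** There is no serious obstacle: the real content is the algebraic bookkeeping of the phase and amplitude of $c(m,j)$, i.e. verifying that $e^{-(i+1)\pi m/2}$ splits as $e^{-\pi m/2}\cdot e^{-i\pi m/2}$ and that the only sign ambiguity left in $|c(m,j)|$ is $\mathrm{sgn}(L_j^{(m)}(m))$ — which uses that $L_j^{m}(m)\in\mathbb{R}$ and that factorials, square roots, and $m^{m/2}$ are all nonnegative for $m\in\mathbb{N}$. The one genuinely delicate point worth a sentence in the proof is that $D_r$ as defined really is the positive part in the polar decomposition, i.e. $D_r = \sqrt{N_r} = |I_r|$; this is consistent with the preceding corollary's eigenvalues $s(n,j,r)^2$ for $N_r$, so $D_r = N_r^{1/2}$ and $U_r$ is the associated partial isometry with initial space $\overline{\mathrm{ran}}\,D_r$ and final space $\overline{\mathrm{ran}}\,I_r$. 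I would close by noting that $U_r$ is not unitary precisely because of the shift $j\mapsto j+r|n|$ and the decay factor $e^{-\pi r|n|/2}$, which records that $I_r$ is not invertible.
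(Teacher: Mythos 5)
Your overall strategy is the right one, and it is the same as the paper's: the corollary carries no separate proof and is meant to be read off from the spectral decomposition theorem by splitting $c(rn,j)$ into modulus and phase, with $D_r=\sqrt{N_r}$ the modulus part and $U_r$ the shift-plus-phase part. But the one piece of genuine content here --- the modulus--phase bookkeeping for $c$, which you yourself identify as ``where the work actually is'' --- is exactly where your write-up goes wrong. Since $|e^{-(i+1)\pi m/2}|=e^{-\pi m/2}$, the true modulus is
\[
|c(m,j)|=2\pi\sqrt{\tfrac{j!}{(j+m)!}}\,m^{m/2}\,e^{-\pi m/2}\,|L_j^{m}(m)|,
\]
i.e.\ the decaying factor $e^{-\pi m/2}$ belongs \emph{inside} $|c(m,j)|$; your formula for $|c(m,j)|$ omits it. This is not cosmetic: for your claim $D_r=\sqrt{N_r}$ to be consistent with the preceding corollary, $s(n,j,r)^2$ must equal the eigenvalue $(ab)\,\delta_{\mathbb{Z}}(rn)\,|c(rn,j)|^2$ of $N_r$, which forces $|c|$ to be the honest complex modulus; with your version of $|c|$ it is not, so your own proof contradicts itself on this point.

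The correct factorization is $c(m,j)=|c(m,j)|\,\sigma(m,j)\,e^{-i\pi m/2}$, so the isometric part should act by $U_r\psi_{jk}^n=e^{-i\pi r|n|/2}\,\sigma(r|n|,j)\,\psi_{j+r|n|,k}^n$ (a genuine partial isometry); the factor printed as $e^{-\pi r|n|/2}$ in the statement is best read as a typo for $e^{-i\pi r|n|/2}$. Your concluding display,
\[
s(n,j,r)\,e^{-\pi r|n|/2}\sigma(r|n|,j)\,\psi_{j+r|n|,k}^n=\sqrt{ab}\,\delta_{\mathbb{Z}}(rn)\,c(rn,j)\,\psi_{j+r|n|,k}^n,
\]
is false under either reading: with the true modulus the left side carries the decay $e^{-\pi r|n|/2}$ twice and no phase; with your modulus it is still missing the unimodular factor $e^{-i\pi r|n|/2}$. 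You flagged the missing phase but proposed it might be ``absorbed into the basis convention''; it cannot be with the paper's fixed basis $\{\psi_{jk}^n\}$, because a phase depending only on $n$ cancels between input and output (both carry the same $n$), so absorbing it would require a $j$-dependent rephasing such as $\psi_{jk}^n\mapsto e^{-i\pi j/2}\psi_{jk}^n$, which the paper does not perform. The repair is short --- put $e^{-\pi m/2}$ into $|c|$, put only $e^{-i\pi r|n|/2}\sigma$ into $U_r$, and then $U_rD_r=I_r$ holds exactly on basis vectors --- but as written your verification does not close.
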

It is illuminating to see the action of the X-ray Transform on the (reduced) Heisenberg fan, which is the set 
\begin{align*}
	R:=\cup_{j\in \mathbb{N}^*}R_j \quad R_j=R_j^+\cup R_j^-, \quad R_j^\pm=\{(\pm 2n, 2n(2j+1)): n\in\mathbb{N}^+\}.
\end{align*} 
 $R$  is the set of pairs of eigenvalues of $\mathcal{L}$ and $-iT$ (defined in Section \ref{intertwining})  corresponding to the joint eigenvectors $\psi_{jk}^n$ (we omit the part of the Heisenberg fan corresponding to the Bessel spectrum):
The operator $U_r$ in the SVD of $I_r$ acts on the Heisenberg fan according to the picture below. 
\\

\begin{center}
\begin{tikzpicture}
	\draw[<->] (0,5) node[left]{$\sigma(\mathcal{L})$} -- (0,0);
	\draw[<->] (-3, 0) -- (3,0) node[below]{$\sigma(-iT)$};
	\filldraw (2,1) circle (2pt) node[below left]{};
	\filldraw (2,3) circle (2pt) node[above left] {};
	\filldraw (2,5) circle (2pt) node[below left]{};
	\filldraw (1,1/2) circle (2pt) node[below left]{};
	\filldraw (1,3/2) circle (2pt) node[below left]{};
	\filldraw (1,5/2) circle (2pt) node[below left]{};
	\draw[->,bend right, thick] (1,3/2) to (1,5/2);
	\draw[->,bend right, thick] (1,1/2) to (1,3/2);
	\draw[->,bend right=15, thick] (2,1) to (2,5);
	\draw[->,bend right=15, thick] (2,3) to (2.2,5.5);
	\draw[-,thick, dashed] (0,0) -- (2, 1) node[anchor=south west, right=10] {$R_1$};
	\draw[-,thick, dashed] (0,0) -- (2, 3) node[anchor=south west, right=10] {$R_2$};
	\draw[-,thick, dashed] (0,0) -- (2, 5) node[anchor=south west, right=10] {$R_3$};
	\draw (0.3, 4.4) node[anchor=south west] {$\dots$};
	\draw (-1.2, 4.4) node[anchor=south west] {$\dots$};
	\foreach \x in {-2, -1, 0,1,2}
	\draw (\x cm,1pt) -- (\x cm,-1pt) node[anchor=north] {$\x$};
	\foreach \y in {1,2,3,4, 5, 6, 7, 8}
	\draw (1pt, 0.5*\y cm) -- (-1pt, 0.5*\y cm) node[anchor=east] {$\y$};
		\filldraw (-2,1) circle (2pt) node[below right]{};
	\filldraw(-2,3) circle (2pt) node[above right] {};
	\filldraw (-2,5) circle (2pt) node[below right]{};
	\filldraw (-1,1/2) circle (2pt) node[below right]{};
	\filldraw (-1,3/2) circle (2pt) node[below right]{};
	\filldraw (-1,5/2) circle (2pt) node[below right]{};
	\draw[->,bend left, thick] (-1,3/2) to (-1,5/2);
	\draw[->,bend left, thick] (-1,1/2) to (-1,3/2);
	\draw[->,bend left=15, thick] (-2,1) to (-2,5);
	\draw[->,bend left=15, thick] (-2,3) to (-2.2,5.5);
	\draw[-,thick, dashed] (0,0) -- (-2, 1) node[anchor=south west, left=10] {};
	\draw[-,thick, dashed] (0,0) -- (-2, 3) node[anchor=south west, left=10] {};
	\draw[-,thick, dashed] (0,0) -- (-2, 5) node[anchor=south west, left=10] {};
	\draw (-3.5, 4.5) node[anchor=south west] {$r=1$};
\end{tikzpicture}
\begin{tikzpicture}
	\draw[<->] (0,5) node[left]{$\sigma(\mathcal{L})$} -- (0,0);
	\draw[<->] (-3, 0) -- (3,0) node[below]{$\sigma(-iT)$};
	\filldraw (2,1) circle (2pt) node[below left]{};
	\filldraw (2,3) circle (2pt) node[above left] {};
	\filldraw (2,5) circle (2pt) node[below left]{};
	\filldraw (1,1/2) circle (2pt) node[below left]{};
	\filldraw (1,3/2) circle (2pt) node[below left]{};
	\filldraw (1,5/2) circle (2pt) node[below left]{};
	\draw[->,bend right=15, thick] (2,1) to (2,3);
	\draw[->,bend right=15, thick] (2,3) to (2,5);
	\draw[-,thick, dashed] (0,0) -- (2, 1) node[anchor=south west, right=10] {$R_1$};
	\draw[-,thick, dashed] (0,0) -- (2, 3) node[anchor=south west, right=10] {$R_2$};
	\draw[-,thick, dashed] (0,0) -- (2, 5) node[anchor=south west, right=10] {$R_3$};
	\draw (0.3, 4.4) node[anchor=south west] {$\dots$};
	\draw (-1.2, 4.4) node[anchor=south west] {$\dots$};
	\foreach \x in {-2, -1, 0,1,2}
	\draw (\x cm,1pt) -- (\x cm,-1pt) node[anchor=north] {$\x$};
	\foreach \y in {1,2,3,4, 5, 6, 7, 8}
	\draw (1pt, 0.5*\y cm) -- (-1pt, 0.5*\y cm) node[anchor=east] {$\y$};
	\filldraw (-2,1) circle (2pt) node[below right]{};
	\filldraw (-2,3) circle (2pt) node[above right] {};
	\filldraw (-2,5) circle (2pt) node[below right]{};
	\filldraw (-1,1/2) circle (2pt) node[below right]{};
	\filldraw (-1,3/2) circle (2pt) node[below right]{};
	\filldraw (-1,5/2) circle (2pt) node[below right]{};
	\draw[->,bend left=15, thick] (-2,1) to (-2,3);
	\draw[->,bend left=15, thick] (-2,3) to (-2,5);
	\draw[-,thick, dashed] (0,0) -- (-2, 1) node[anchor=south west, left=10] {};
	\draw[-,thick, dashed] (0,0) -- (-2, 3) node[anchor=south west, left=10] {};
	\draw[-,thick, dashed] (0,0) -- (-2, 5) node[anchor=south west, left=10] {};
	%
	\draw (-3.5, 4.5) node[anchor=south west] {$r=\frac{1}{2}$};
\end{tikzpicture}
\end{center}
In the figures above, dashed lines are rays of the Heisenberg fan (on the full Heisenberg group). Dots are elements of the Heisenberg fan of $\overline{\heis}$, each $(2n, 2|n|(2j+1))$ corresponding to the subspace $\{\psi_{jk}^n\}_{k\in\mathbb{N}}$. Arrows represent the action of $U_r$ for $r=1$ and  $r=1/2$ in the first and second graphic respectively. 

The following functions appear in the expression for the singular values:
\begin{definition}[Diagonal Laguerre Function]
	For $x\in \mathbb{R}$ and $j\in \mathbb{N}$, call $$l_j(x)=L_j^{(x)}(x):=\frac{1}{2\pi i}\oint_C \bigg(\frac{e^{-z/(1-z)}}{1-z}\bigg)^x\frac{dz}{(1-z)z^{j+1}}$$ the \textit{diagonal Laguerre function} of order $j$. Here $C$ is a counterclockwise circle around the origin not encircling the point $z=1$. 
\end{definition}
Next we show that any $L^2$ function on $\overline{\heis}$ is determined by its integrals over geodesics of two $r_1, r_2\in\mathbb{Q}^+$ satisfying a compatibility condition. This type of results is called a Two-Radius Theorem \cite{delsarteLecturesTopicsMean}, but a more appropriate name might be a ``Two-Momenta Theorem," since $\lambda_i:=1/\sqrt{r_j}$ is the momentum of the sub-Riemannian geodesic $\gamma_{r_j}$ \cite{montgomeryTourSubriemannianGeometries2006}. 
\begin{corollary}[Two-Radius Theorem]
	Let $f\in L^2(\overline{\heis})$ with $I_{r_1}f=I_{r_2}f=0$ for some $r_i\in \mathbb{Q}^+$. Then $f=0$ provided that
	\begin{itemize}
		\item[1.] $r_1/r_2$ is not a ratio of roots of diagonal Laguerre functions $l_j(x):=L_j^{(x)}(x)$ for any $j \in \mathbb{N}$
		\item[2.] $\sqrt{r_1/r_2}$ is not a ratio of roots of the Bessel function $J_0$.
	\end{itemize} 
\end{corollary}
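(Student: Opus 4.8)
The plan is to reduce everything to the spectral data already computed, using the orthogonal decomposition $L^2(\overline{\heis})\cong L^2(\mathbb{C})\oplus {^0L^2(\overline{\heis})}$ and the fact (Proposition~\ref{decomp}) that both $I_{r_1}$ and $I_{r_2}$ respect it; it then suffices to show that $I_{r_1}f=I_{r_2}f=0$ forces $f$ to vanish on each summand separately.

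\emph{The continuous part.} On $L^2(\mathbb{C})$ I would pass to the planar Fourier transform. By the theorem identifying $I_r|_{L^2(\mathbb{C})}$ with $2\pi\sqrt{ab}\,M^{\sqrt r}$ together with the formula $\widehat{M^R g}(\xi)=J_0(R|\xi|)\widehat g(\xi)$, the hypothesis $I_{r_i}f=0$ is equivalent to $J_0(\sqrt{r_i}\,|\xi|)\,\widehat f(\xi)=0$ for a.e.\ $\xi\in\mathbb{R}^2$. Hence $\widehat f$ vanishes a.e.\ off the set of $\xi$ for which both $\sqrt{r_1}|\xi|$ and $\sqrt{r_2}|\xi|$ are zeros of $J_0$; for $\xi\neq 0$ this forces $\sqrt{r_1/r_2}$ to be a ratio of two zeros of $J_0$, which hypothesis~(2) excludes. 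So that set lies in $\{0\}$, a Lebesgue-null set, whence $\widehat f=0$ and $f=0$ on $L^2(\mathbb{C})$.

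\emph{The discrete part.} On ${^0L^2(\overline{\heis})}$ I would expand $f=\sum_{n\in\mathbb{Z}^*}\sum_{j,k\geq 0}f_{jk}^{n}\psi_{jk}^{n}$ and apply the Spectral Decomposition Theorem for $I_r$. Since for fixed $n$ the shift $j\mapsto j+r_i|n|$ is injective, the vectors $\{\psi_{j+r_i|n|,k}^{n}\}_j$ are pairwise orthogonal, so $I_{r_i}f=0$ is equivalent, coefficient by coefficient, to $f_{jk}^{n}\,c(r_i n,j)=0$ for all $j,k,n$ (with the convention that the symbol is $0$ when $r_i n\notin\mathbb{Z}$). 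Thus every triple $(n,j,k)$ in the support of $f$ satisfies $c(r_1 n,j)=c(r_2 n,j)=0$. Reading off the explicit form of $c(m,j)$, whose only vanishing for $m\in\mathbb{N}^+$ occurs at the zeros of $L_j^{(m)}(m)=l_j(m)$, this says $r_1 n,r_2 n\in\mathbb{Z}$ and $l_j(r_1|n|)=l_j(r_2|n|)=0$. But then $r_1/r_2=(r_1|n|)/(r_2|n|)$ is a ratio of two zeros of the diagonal Laguerre function $l_j$, contradicting hypothesis~(1). Hence the support of $f$ is empty and $f=0$ on ${^0L^2(\overline{\heis})}$; combined with the previous paragraph, $f=0$.

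\emph{Expected main obstacle.} The delicate point is the interaction with the quantization factor $\delta_{\mathbb{Z}}(r_i n)$: for a fixed $n$ only those $r_i$ with $r_i n\in\mathbb{Z}$ contribute to the $n$-block, so one must rule out that $f$ is supported on blocks where at least one transform acts trivially for arithmetic rather than Laguerre reasons (in which case hypothesis~(1) never enters). Writing $r_i=a_i/b_i$ in lowest terms, $r_i n\in\mathbb{Z}$ iff $b_i\mid n$, so this forces one to track the joint divisibility conditions $b_1\mid n$, $b_2\mid n$ alongside the vanishing of $l_j$, and it is exactly here that the precise form of the two-radius hypotheses (and, if need be, a mild additional compatibility assumption on $r_1,r_2$) must be brought in. Everything else is the routine dictionary between the spectral formulas of the preceding section and statements about the supports of Fourier or Laguerre coefficients.
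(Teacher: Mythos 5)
Your route is the same as the paper's: split $f$ along $L^2(\overline{\heis})\cong L^2(\mathbb{C})\oplus {}^0L^2(\overline{\heis})$, dispose of the continuous part via the Fourier multiplier $J_0(\sqrt{r}\,|\xi|)$, and dispose of the discrete part by asking when the singular values $s(n,j,r_1)$ and $s(n,j,r_2)$ can vanish simultaneously. Your execution is more detailed than the paper's (which is essentially a two-line version of the same reduction), and the continuous half is fine; in fact on $L^2(\mathbb{C})$ the zero set of $\xi\mapsto J_0(\sqrt{r_1}|\xi|)$ is already Lebesgue-null, so a single radius suffices there and hypothesis (2) only becomes essential on larger function spaces where $\widehat f$ can charge a circle.

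The ``expected main obstacle'' you flag is not a technicality you failed to dispatch: it is a genuine gap, and it is present in the paper's own proof as well. The paper asserts that $s(n,j,r_1)=s(n,j,r_2)=0$ \emph{if and only if} $r_in$ is a zero of $l_j$, but $s(n,j,r)=\sqrt{ab}\,\delta_{\mathbb{Z}}(rn)\,|c(rn,j)|$ also vanishes for the purely arithmetic reason $rn\notin\mathbb{Z}$. Writing $r_i=a_i/b_i$ in lowest terms, $r_in\in\mathbb{Z}$ iff $b_i\mid n$; taking $n=1$ shows that unless $b_1=1$ or $b_2=1$ (i.e.\ unless at least one $r_i$ is a positive integer) the entire block $\{\psi^{1}_{jk}\}_{j,k}$ is annihilated by both $I_{r_1}$ and $I_{r_2}$. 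For instance $r_1=1/2$, $r_2=1/3$ admits nonzero $f\in{}^0L^2(\overline{\heis})$ with $I_{r_1}f=I_{r_2}f=0$ while neither hypothesis (1) nor (2) is violated. So the statement needs an extra arithmetic compatibility condition (that for every $n\in\mathbb{Z}^*$ at least one $r_in\in\mathbb{Z}$, which forces one of the $r_i$ to be an integer), and even then, for those $n$ seen by only one of the two transforms one must require $l_j(r_i|n|)\neq 0$ outright rather than the ratio condition of hypothesis (1). Your instinct to track the divisibility conditions $b_1\mid n$, $b_2\mid n$ is exactly what is required; as written, neither your argument nor the paper's closes this.
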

\begin{proof}
	For and $n\in \mathbb{Z}^*$ let $s(n, j, r_1)$ and $s(n, j, r_2)$ be the singular values for $\psi_{jk}^n$ corresponding to $I_{r_1}$ and $I_{r_2}$ respectively. Then $f=0$ provided that  $s(n, j, r_1)$ and $s(n, j, r_2)$ are not both zero. We have  $s(n, j, r_1)=s(n, j, r_2)=0$  if and only if $r_jn$ is a zero of the diagonal Laguerre function $l_j$ for $j=1,2$. In this case,  $r_1/r_2$ is a ratio of zeros of Laguerre polynomials. Similarly, $J_0(\sqrt{r_i}|\xi|)$ vanishes if and only if $r_j|\xi|$ is a zero of $J_0$. Then $r_1/r_2$ is a ratio of zeros of $J_0$. 
\end{proof}
\begin{remark}
	There is a lof of room to strengthen this result. For example, one may consider $f$ in more general function spaces. This is the topic of a future work. 
\end{remark}
\section{Intertwining differential operators}\label{intertwining}
Note that the standard left-invariant vector fields on $\heis$ (or on $\overline{\heis}$) are 
\begin{align*}
X:=\partial_x-\frac{1}{2}y\partial_t && Y:=\partial_y+\frac{1}{2}x\partial_t && T:=\partial_t.
\end{align*}
Also, the standard right-invariant vector fields are
\begin{align*}
\tilde{X}:=\partial_x+\frac{1}{2}y\partial_t && \tilde{Y}:=\partial_y-\frac{1}{2}x\partial_t && \tilde{T}:=\partial_t.
\end{align*}
Let $-\mathcal{L}:=X^2+Y^2$, be the left sub-laplacian, and $-\tilde{\mathcal{L}}:=\tilde{X}^2+\tilde{Y}^2$ be the right sub-laplacian. Also let $\square_r=\mathcal{L}+rT^2$ on $\hred$.

\begin{proposition}
	For any right-invariant vector field $\tilde{V}$ and $f\in S(\hred)$ we have 
	\begin{align*}
	{I}_r\left(\tilde{V}f\right)(z, t)=\tilde{V}{I}_r f(z, t).
	\end{align*}
\end{proposition}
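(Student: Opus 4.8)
The plan is to recognize $I_r$ as an $s$-average of pull-backs by right translations, and to exploit the fact that right-invariant vector fields are, by definition, invariant under right translation. Write $R_h(g):=gh$ for right translation by $h\in\hred$. The one fact I need is the elementary identity
\[
\tilde V\bigl(f\circ R_h\bigr)=\bigl(\tilde V f\bigr)\circ R_h,\qquad f\in S(\hred),
\]
valid for every right-invariant vector field $\tilde V$. This follows from the chain rule together with $(dR_h)_g\tilde V_g=\tilde V_{gh}$, which is exactly the statement that $\tilde V$ is right-invariant; equivalently, it is the infinitesimal form of the fact that left translations — the flows of the right-invariant fields $\tilde X,\tilde Y,\tilde T$ — commute with right convolution.

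The proof then assembles in three short steps. First, from the definition, $I_r f(z,t)=\int_0^{2\pi\sqrt{ab}}\bigl(f\circ R_{\gamma_r(s)}\bigr)(z,t)\,ds$, so $I_r$ really is an average over $s$ of the pull-backs $f\circ R_{\gamma_r(s)}$, each $\gamma_r(s)$ being a fixed group element. Second, apply $\tilde V$ and move it inside the integral; this is legitimate because $f$ is smooth and the integration runs over the compact interval $[0,2\pi\sqrt{ab}]$, so the integrand $(z,t,s)\mapsto(f\circ R_{\gamma_r(s)})(z,t)$ and its $\tilde V$-derivative are jointly continuous and the standard differentiation-under-the-integral-sign theorem applies. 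Third, invoke the identity above on each slice:
\[
\tilde V\,I_r f=\int_0^{2\pi\sqrt{ab}}\tilde V\bigl(f\circ R_{\gamma_r(s)}\bigr)\,ds
=\int_0^{2\pi\sqrt{ab}}\bigl(\tilde V f\bigr)\circ R_{\gamma_r(s)}\,ds
=I_r\bigl(\tilde V f\bigr).
\]
Since every object in sight is $\Gamma$-periodic in $t$, the computation performed on $\heis$ descends verbatim to $\hred$.

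There is no real obstacle here: the content is entirely the invariance identity of the first paragraph plus the routine justification of differentiating under the integral, and the main ``danger'' is only bookkeeping. If one prefers to avoid the coordinate-free argument, one can instead expand $\tilde X,\tilde Y,\tilde T$ explicitly and check the commutation by direct calculation with the group law — more tedious, and the place where sign and orientation conventions are easiest to get wrong — so I would keep the conceptual route as the primary one. It is worth noting that the analogous statement genuinely fails for left-invariant vector fields, which generate right translations; these do not commute with the right-translation averaging defining $I_r$, consistent with the earlier remark that $I_r$ is not formally self-adjoint.
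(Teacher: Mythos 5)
Your proof is correct and follows essentially the same route as the paper: the paper expresses $I_r$ as group convolution against a compactly supported measure $\mu_r$ carried by the curve $\gamma_r$ and invokes the fact that right-invariant vector fields pass through such convolutions, which is precisely your average-of-right-translation-pullbacks argument unpacked. The additional detail you supply (the invariance identity $\tilde V(f\circ R_h)=(\tilde V f)\circ R_h$ and the justification for differentiating under the integral over the compact parameter interval) simply makes the paper's stated proof idea explicit.
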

\begin{proof}[Proof idea]
	${I}_r$ is a group convolution operator by a compactly supported measure on $\hred$:
	\begin{align*}
	{I}_r f=\mu_r*_\heis f
	\end{align*}
	where
	\begin{align*}
	\int_{\hred}fd\mu_=\int_0^{2\pi \sqrt{ab}}f\left(\gamma_r(s)\right)ds={I}_r f(0).
	\end{align*}
	And for a right-invariant vector field on $\hred$,
	\begin{align*}
	\tilde{V}\left(\mu_r*_\heis f\right)=\mu_r*_\heis \left(\tilde{V}f\right).
	\end{align*}
\end{proof}
\begin{corollary}
	The X-ray transform ${I}_r$ commutes with $T$ and the right sub-laplacian:
	\begin{align*}
	{I}_r \left(Tf\right)=T\left({I}_r f\right) &&
	{I}_r\left(\tilde{\mathcal{L}}f\right)=\tilde{\mathcal{L}}\left({I}_r f\right).
	\end{align*}
\end{corollary}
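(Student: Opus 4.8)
The plan is to deduce the corollary directly from the preceding proposition, which states that $I_r$ commutes with every right-invariant vector field $\tilde V$ on $\hred$. First I would observe that $T = \tilde T$: both the left-invariant and the right-invariant vector field in the $\partial_t$-direction coincide with $\partial_t$, as is visible from the explicit formulas $T = \partial_t$ and $\tilde T = \partial_t$ given just above. Hence $T$ is in particular a right-invariant vector field, and applying the proposition with $\tilde V = T$ gives $I_r(Tf) = T(I_r f)$ for all $f \in S(\hred)$.

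For the second identity I would use that the right sub-laplacian is built out of right-invariant vector fields: $-\tilde{\mathcal L} = \tilde X^2 + \tilde Y^2$. The proposition applies to first-order right-invariant operators, so I would iterate it: for any right-invariant $\tilde V$ and any $g \in S(\hred)$, $I_r(\tilde V g) = \tilde V(I_r g)$; applying this with $g = \tilde V f$ (which is again Schwartz, since $\tilde V$ preserves $S(\hred)$) yields $I_r(\tilde V^2 f) = \tilde V^2(I_r f)$. Taking $\tilde V = \tilde X$ and $\tilde V = \tilde Y$ and adding, $I_r\big((\tilde X^2 + \tilde Y^2)f\big) = (\tilde X^2 + \tilde Y^2)(I_r f)$, i.e. $I_r(\tilde{\mathcal L} f) = \tilde{\mathcal L}(I_r f)$.

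The only mild subtlety — and the point I would expect to need a word of care — is the convolution-structure bookkeeping: the proposition is phrased via $I_r f = \mu_r *_\heis f$ and the identity $\tilde V(\mu_r *_\heis f) = \mu_r *_\heis (\tilde V f)$, which is the general fact that right-invariant differentiation commutes with left convolution by a (compactly supported) measure. One should check that this passes to second-order operators, but since $\tilde X, \tilde Y$ map $S(\hred)$ to $S(\hred)$ the iteration above is legitimate and no analytic difficulty arises. Thus the corollary follows immediately, and there is no real obstacle beyond recording these two specializations of the proposition.
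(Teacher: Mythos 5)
Your proposal is correct and matches the paper's (implicit) argument exactly: the corollary is an immediate consequence of the preceding proposition, since $T=\tilde T=\partial_t$ is right-invariant and $\tilde{\mathcal L}$ is a quadratic expression in the right-invariant fields $\tilde X,\tilde Y$, so the intertwining identity iterates. The paper offers no further proof, and your care about iterating to second order and preserving $S(\hred)$ is the only point worth recording.
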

We use these facts to prove the most interesting intertwining property of ${I}_r$:
\begin{theorem}\label{intertwine}${I}_r$ intertwines the left sublaplacian $\mathcal{L}$ with the operator $\square_r$ on $\hred$:
\begin{align*}
{I}_r\left(\mathcal{L}f\right)=\square_r\left({I}_r f\right), \quad f\in S(\hred).
\end{align*}
\end{theorem}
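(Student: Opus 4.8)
The plan is to derive the identity from operator identities for the convolution operator $I_r$, exploiting that rotations of the $\mathbb{C}$-factor act by automorphisms of $\hred$ and are quasi-invariant for the defining geodesic $\gamma_r$.

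\textbf{Step 1: a vector-field identity.} Expanding $-\mathcal{L}=X^2+Y^2$ and $-\tilde{\mathcal{L}}=\tilde X^2+\tilde Y^2$ in the coordinates $(x,y,t)$, the pure $\parx^2,\pary^2$ terms and the $\tfrac14|z|^2\parz^2$ terms coincide, so subtracting leaves only the mixed term:
\[
\mathcal{L}=\tilde{\mathcal{L}}-2\,\Theta T,\qquad \Theta:=x\pary-y\parx .
\]
Here $\Theta$ generates the rotation flow $z\mapsto e^{i\theta}z$ on $\hred$ and commutes with $T=\parz$.

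\textbf{Step 2: the twisted intertwining relation.} The crucial ingredient is
\[
I_r(\Theta f)=\Theta(I_r f)-\tfrac{r}{2}\,T(I_r f),\qquad f\in S(\hred).
\]
To prove it I would use that $U_\theta\colon(z,t)\mapsto(e^{i\theta}z,t)$ is a group automorphism of $\hred$, together with the elementary computation $U_{-\theta}\,\gamma_r(s)=\gamma_r(s-\sqrt r\,\theta)\cdot\bigl(0,\tfrac{r\theta}{2}\bigr)$ — rotating $\gamma_r$ amounts to a parameter shift composed with a \emph{central} translation. Substituting $U_\theta(z,t)$ into the integral defining $I_r f$, changing variables $s\mapsto s+\sqrt r\,\theta$ (legitimate because the integrand is $2\pi\sqrt{ab}$-periodic in $s$, $\gamma_r$ being closed), and using that $U_\theta$ is an automorphism, the integrand becomes $f\bigl(U_\theta[(z,t)\gamma_r(s)]\cdot(0,\tfrac{r\theta}{2})\bigr)$. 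Differentiating in $\theta$ at $0$ inside the integral, the outer $U_\theta$ contributes $\Theta$ applied after $I_r$, while the central factor $(0,\tfrac{r\theta}{2})$ contributes $\tfrac r2$ times $T$ applied before $I_r$; since $I_rT=TI_r$, this gives the displayed relation. I expect this step to be the main obstacle: one must pin down the sign and the coefficient $\tfrac r2$ in the central shift and justify differentiation under the integral sign.

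\textbf{Step 3: assembly.} Combining Step 1 with the already-established commutations $I_r\tilde{\mathcal{L}}=\tilde{\mathcal{L}}I_r$, $I_rT=TI_r$, and $[\Theta,T]=0$, one computes $I_r\mathcal{L}=I_r\tilde{\mathcal{L}}-2\,I_r\Theta T=\tilde{\mathcal{L}}I_r-2\,T\,I_r\Theta=\tilde{\mathcal{L}}I_r-2T\bigl(\Theta I_r-\tfrac r2 TI_r\bigr)=\bigl(\tilde{\mathcal{L}}-2\Theta T+rT^2\bigr)I_r=\square_r I_r$, which is the claim for $f\in S(\hred)$. As an independent check, the identity can be verified on the Hilbert basis $\psi_{jk}^n$: since $\mathcal{L}\psi_{jk}^n=2|n|(2j+1)\psi_{jk}^n$ and $T\psi_{jk}^n=2in\,\psi_{jk}^n$, the spectral shift $j\mapsto j+r|n|$ in the formula for $I_r$ raises the $\mathcal{L}$-eigenvalue by $4r|n|^2$, which is exactly cancelled by the $rT^2$ term contributing $-4rn^2=-4r|n|^2$ — explaining the appearance of $rT^2$ in $\square_r$.
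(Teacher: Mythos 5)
Your argument is correct and is essentially the paper's own proof: the identity $\mathcal{L}=\tilde{\mathcal{L}}-2\Theta T$ (the paper writes $\partial_\theta$ for your $\Theta$), the twisted relation $I_r\Theta=(\Theta-\tfrac{r}{2}T)I_r$ obtained from the helical symmetry of $\gamma_r$ under rotations, and the commutation of $I_r$ with $T$ and $\tilde{\mathcal{L}}$ are exactly the three ingredients the paper assembles in the same way. The only nit is the sign of the central shift in your Step 2 --- one finds $U_\theta\gamma_r(s)=\gamma_r(s+\sqrt{r}\,\theta)\cdot\left(0,-\tfrac{r\theta}{2}\right)$, so the rotated transform is $I_rf\left(e^{i\theta}z,\,t-\tfrac{r\theta}{2}\right)$ --- but you flag this yourself and the differentiated identity you state is the correct one, matching the paper's lemma.
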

We first note the helical symmetry of  ${I}_r$.
\begin{lemma}
	Define the rotation map $\mathcal{R}_\theta^*f(z, t)=f(e^{i\theta}z, t)$. Then a straightforward computation gives
	\begin{align*}
		{I}_r \left(\mathcal{R}_{\theta}^*f\right)=\mathcal{R}^*_\theta{I}_r f\left(z, t-\tfrac{1}{2}r\theta\right)
	\end{align*}
	for $f\in C_c^\infty(G)$. Differentiating in $\theta$ yields
$$	I_r(\partial_\theta f)(z, t)=(\partial_\theta-\tfrac{1}{2}rT)I_r f(z, t).$$
\end{lemma}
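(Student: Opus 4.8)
The plan is to unwind the definition of $I_r$ together with the group law, and then perform a single change of variables in $s$ that absorbs the rotation $e^{i\theta}$ into the helical phase of $\gamma_r$. From the multiplication law,
\begin{align*}
(z,t)\gamma_r(s)=\Big(z+\sqrt r\, e^{is/\sqrt r},\ t+\tfrac{\sqrt r}{2}s+\tfrac{\sqrt r}{2}\operatorname{Im}\big(\bar z\, e^{is/\sqrt r}\big)\Big),
\end{align*}
so applying $\mathcal R_\theta^*$ and evaluating gives
\begin{align*}
I_r(\mathcal R_\theta^* f)(z,t)=\int_0^{2\pi\sqrt{ab}} f\Big(e^{i\theta}z+\sqrt r\, e^{i\theta}e^{is/\sqrt r},\ t+\tfrac{\sqrt r}{2}s+\tfrac{\sqrt r}{2}\operatorname{Im}\big(\bar z\, e^{is/\sqrt r}\big)\Big)\,ds.
\end{align*}

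The first step is to substitute $s=\sigma-\sqrt r\,\theta$. The key algebraic observations are that $e^{i\theta}e^{is/\sqrt r}=e^{i\sigma/\sqrt r}$, that $\tfrac{\sqrt r}{2}s=\tfrac{\sqrt r}{2}\sigma-\tfrac r2\theta$, and that the twist term is invariant under the simultaneous replacement $z\mapsto e^{i\theta}z$, $s\mapsto s+\sqrt r\theta$, since $\operatorname{Im}(\bar z\, e^{is/\sqrt r})=\operatorname{Im}(e^{-i\theta}\bar z\, e^{i\sigma/\sqrt r})=\operatorname{Im}(\overline{e^{i\theta}z}\, e^{i\sigma/\sqrt r})$. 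Hence the integrand becomes precisely $f\big((e^{i\theta}z,\,t-\tfrac r2\theta)\gamma_r(\sigma)\big)$; that is, rotating the argument of $f$ is reproduced by left multiplication by the rotated, time-shifted point $(e^{i\theta}z,\,t-\tfrac r2\theta)$ along the \emph{same} geodesic $\gamma_r$.

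The second step is to restore the limits of integration: the substitution sends $[0,2\pi\sqrt{ab}]$ to $[\sqrt r\theta,\,2\pi\sqrt{ab}+\sqrt r\theta]$, but $\sigma\mapsto f\big((e^{i\theta}z,t-\tfrac r2\theta)\gamma_r(\sigma)\big)$ is $2\pi\sqrt{ab}$-periodic because $\gamma_r$ is closed of that period, so the two integrals agree. This yields $I_r(\mathcal R_\theta^* f)(z,t)=I_rf(e^{i\theta}z,\,t-\tfrac r2\theta)=\mathcal R_\theta^*(I_rf)\big(z,\,t-\tfrac r2\theta\big)$, the first display. For the infinitesimal version I would differentiate at $\theta=0$, interchanging $\tfrac{d}{d\theta}$ with the finite-length integral defining $I_r$ by dominated convergence (legitimate since $f\in C_c^\infty$); on the left $\tfrac{d}{d\theta}\big|_{\theta=0}\mathcal R_\theta^* f=\partial_\theta f$ gives $I_r(\partial_\theta f)$, and on the right the chain rule applied to $\theta\mapsto I_rf(e^{i\theta}z,t-\tfrac r2\theta)$ produces $(\partial_\theta-\tfrac r2 T)I_rf$ because $T=\partial_t$.

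I expect no serious obstacle here; the content is essentially this one change of variables. The only point requiring genuine care is verifying that the vertical (twist) term of the group law is unchanged under the combined rotation-and-shift — equivalently, recognizing that a rotation of the argument of $I_rf$ is exactly compensated by reparametrizing the orbit and translating the center coordinate. A minor bookkeeping remark worth including is that both $\mathcal R_\theta^*$ and the shift $t\mapsto t-\tfrac r2\theta$ descend to $\hred\cong\mathbb C\times(\mathbb R/\pi\mathbb Z)$, which is immediate since $\mathcal R_\theta$ acts only on the $\mathbb C$-factor.
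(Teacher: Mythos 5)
Your proof is correct, and since the paper leaves this lemma as ``a straightforward computation'' with no written proof, your change of variables $s=\sigma-\sqrt r\,\theta$ (absorbing the rotation into the helical phase, producing the central shift $-\tfrac r2\theta$ from the $\tfrac{\sqrt r}{2}s$ term, and using the $2\pi\sqrt{ab}$-periodicity of $\gamma_r$ in $\hred$ to restore the limits) is exactly the intended argument. The verification that the twist term $\operatorname{Im}(\bar z\,e^{is/\sqrt r})$ is invariant under the combined rotation-and-reparametrization, and the differentiation at $\theta=0$ giving $(\partial_\theta-\tfrac12 rT)I_rf$, are both carried out correctly.
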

\begin{proof}[Proof of Theorem \ref{intertwine}]
	A straightforward computation gives
	\begin{align*}
		\tilde{\mathcal{L}}-\mathcal{L}=2\partial_\theta T.
	\end{align*}
	Thus
	\begin{align*}
		{I}_r\left((\mathcal{L}-\tilde{\mathcal{L}})f\right)(z, t)=-2\left(\partial_\theta-\tfrac{1}{2}rT\right)T{I}_r f(z, t),
	\end{align*}
	so that
	\begin{align*}
		{I}_r\left(\mathcal{L}f\right)
		&={I}_r\left(\tilde{\mathcal{L}}f\right)+\left(-2\partial_\theta+r\right)T{I}_r f
		=\tilde{\mathcal{L}}\left({I}_r f\right)+\left(-2\partial_\theta+r\right)T{I}_r f\\
		&=\left(\tilde{\mathcal{L}}-2\partial_\theta T+rT^2\right)\left({I}_r f\right)\
		=\left(\mathcal{L}+rT^2\right)\left({I}_r f\right)\\
		&=:\square_r \left({I}_r f\right)
	\end{align*}
as desired. 
\end{proof}
\begin{remark}
	The functions $\psi_{jk}^{n}$ are joint eigenfunctions of $\mathcal{L}$ and $T$ (see \ref{joint}). Indeed
	\begin{align*}
		\begin{cases}
			\mathcal{L}\psi_{jk}^{n}&=2|n|(1+2j)\,\psi_{jk}^{n}\\
			T\psi_{jk}^{n}&=2in\,\psi_{jk}^{n}.
		\end{cases}
	\end{align*}
	So
	\begin{align*}
		\square_r\psi_{jk}^{n}
		&=2|n|\left(1+2(j-|n|r)\right)\psi_{jk}^{n}.
	\end{align*}
	What's more the functions $\psi_{jk}^{n}$ are eigenfunctions of the right sublaplacian $\tilde{\mathcal{L}}$. Indeed
	\begin{align*}
		\tilde{\mathcal{L}}\psi_{jk}^{n}=2|n|(1+2k)\psi_{jk}^{n}.
	\end{align*}
\end{remark}

\section{Ongoing Work}

\begin{itemize}
	\item[1.] Explicit Inversion formulas. 
	The two-radius theorem implies that a function $f\in L^2(\overline{\heis})$ may be recovered form its X-ray transform $If$. We can therefore write down an inversion formula in terms of the singular values, but it is desirable to have a closed form inversion formula. 
	\item[2.] Chacterize the zeros of the diagonal Laguerre functions. The following conjecture, if true, would imply a one-radius theorem.
	\begin{conjecture}
		The only $(j, n)\in \mathbb{N}\times \mathbb{Z}^*$ for which $l_j(n):=L_j^n(n)=0$ is $(j, n)=(2, 2)$. 
	\end{conjecture}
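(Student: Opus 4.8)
Write $l_j(n)=L_j^{(n)}(n)$. The cases $j\le 2$ are immediate: $l_0(n)=l_1(n)=1$, and $l_2(n)=\tfrac12 n^2-(n+2)n+\tfrac12(n+1)(n+2)=1-\tfrac n2$, which vanishes only at $n=2$. So fix $j\ge 3$; the goal is to show that the integer $n$ is never a root of the Laguerre polynomial $L_j^{(n)}(x)$.

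The first ingredient is an arithmetic reduction. After clearing denominators, $P_{j,n}(x):=j!\,L_j^{(n)}(x)=\sum_{k=0}^{j}(-1)^k\tfrac{j!}{k!}\binom{j+n}{j-k}x^k\in\mathbb{Z}[x]$ is monic up to the sign $(-1)^j$, with constant term $P_{j,n}(0)=(n+1)(n+2)\cdots(n+j)\equiv j!\pmod n$. By the rational root theorem, $l_j(n)=0$ forces $n\mid j!$; reducing $P_{j,n}(n)=0$ modulo $n^2,n^3,\dots$ yields further necessary congruences. A clean \emph{sufficient} condition would be that for $j\ge 3$ and $n\ge 1$ the polynomial $L_j^{(n)}(x)$ has no rational root --- equivalently, no linear factor over $\mathbb{Q}$ --- which I would attack by adapting the Newton-polygon / prime-selection method of Schur and of Filaseta--Lam: a prime $p$ with $\tfrac j2<p\le j$ (Bertrand's postulate) controls the $p$-adic Newton polygon of $P_{j,n}$ and should exclude a factor of degree $1$. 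The exceptional pair $(2,2)$ is precisely the classical factorization $L_2^{(2)}(x)=\tfrac12(x-2)(x-6)$, the one place in the relevant range where such an argument must fail.

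Complementing this, I would run an analytic argument from the contour representation $l_j(x)=\tfrac{1}{2\pi i}\oint_C\frac{(1+u)^{x+j}e^{-xu}}{u^{j+1}}\,du$ and the three-term recurrence $(j+1)l_{j+1}(n)=(2j+1)l_j(n)-(j+n)l_{j-1}(n)$, $l_0=l_1=1$. A saddle-point analysis as $n\to\infty$ with $j$ fixed --- the saddle at $u=0$ collides with the pole, so one rescales $u=v/\sqrt n$ --- gives $l_j(n)=c_j\,n^{\lfloor j/2\rfloor}\bigl(1+O(n^{-1/2})\bigr)$ with $c_j\neq 0$: one finds $c_j=\dfrac{(-1/2)^{j/2}}{(j/2)!}$ for $j$ even, while for $j$ odd the leading term of the saddle expansion cancels and the next coefficient is $c_j=\dfrac{(-1/2)^{(j-3)/2}}{((j-3)/2)!}\cdot\dfrac{-(2j+1)}{3(j-1)}\neq 0$. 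Making the error explicit yields a threshold $N_0(j)$ with $l_j(n)\neq 0$ for $n\ge N_0(j)$, which together with $n\mid j!$ handles all $n$ large compared with $j$. For $j\to\infty$ with $n$ fixed, the Plancherel--Rotach asymptotics give $l_j(n)\sim\kappa_n\,j^{n/2-1/4}\cos\!\bigl(2\sqrt{nj}-\tfrac{n\pi}{2}-\tfrac{\pi}{4}\bigr)$, an oscillation with polynomially growing envelope, hence nonzero except for $j$ near the discrete set $\{(k\pi+\mathrm{const})^2/(4n):k\in\mathbb{Z}\}$.

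The hard part is the ``diagonal'' range $n\asymp j$, compounded by the fact that $n\mid j!$ does not finitize the problem --- every $n\le j$ divides $j!$ --- so for each fixed $n$ one must control \emph{all} large $j$. There $L_j^{(n)}$ has its cluster of zeros around $x=n$, and on the scale $\sqrt n$ it is approximated by a rescaled Hermite polynomial $H_j$; since $H_j(0)\neq 0$ for $j$ even while $H_j(0)=0$ for $j$ odd, the even case sits comfortably away from a zero whereas the odd case is delicate (which is exactly why $c_j$ above is only subleading for $j$ odd). I expect the decisive obstacle to be either (a) making the no-linear-factor argument uniform over all $(j,n)$ with $j\ge 3$, the genuinely new input required in the diagonal range, or (b) an effective lower bound keeping the integer $n$ a definite distance from the algebraic zeros of $L_j^{(n)}$; either would close the gap left between the arithmetic reduction and the asymptotics above.
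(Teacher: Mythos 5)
This statement is labelled a \emph{Conjecture} in the paper's ``Ongoing Work'' section, and the paper supplies no proof of it, so there is nothing to compare your argument against. What you have written is, by your own framing, a research programme rather than a proof, and it does contain genuine gaps. The parts that are solid: the evaluations $l_0=l_1=1$, $l_2(n)=1-n/2$, the identification $L_2^{(2)}(x)=\tfrac12(x-2)(x-6)$, the integrality and near-monicity of $j!\,L_j^{(n)}$, the reduction $n\mid j!$, the contour representation, and the leading coefficients $c_j$ (I checked $j=2,3,4$) are all correct. But none of these closes the problem, and you correctly diagnose why: $n\mid j!$ excludes nothing in the range $1\le n\le j$, and the two asymptotic regimes (fixed $j$, $n\to\infty$; fixed $n$, $j\to\infty$) each leave an unbounded, non-effective middle range uncovered.

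The decisive gap is the step you defer to a ``Schur / Filaseta--Lam'' style argument. That method, as it exists, proves irreducibility (or excludes factors of prescribed degrees) for Laguerre polynomials whose parameter $\alpha$ is fixed or tied to $j$ in a specific way, by controlling the $p$-adic Newton polygon of the coefficients $\tfrac{j!}{k!}\binom{j+n}{j-k}$. Here the parameter is $\alpha=n$ with no a priori relation to $j$, and the $p$-adic valuations of $\binom{j+n}{j-k}$ for a prime $p\in(j/2,j]$ depend on the residue of $n$ modulo powers of $p$; there is no uniform statement available, and you give no argument for one. Moreover, excluding linear factors is both too strong and not enough: too strong because $L_j^{(n)}$ \emph{can} have integer roots (your own example $L_2^{(2)}$ has roots $2$ and $6$), and not enough because the conjecture concerns the specific value $x=n$, not arbitrary rational roots --- so even a complete no-linear-factor theorem for $j\ge 3$ would be proving a different (and in general false-adjacent) statement rather than the one needed. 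Similarly, the Plancherel--Rotach oscillation argument leaves an infinite set of $j$ near each zero of the cosine unhandled. In short: the preliminary reductions are fine, but the conjecture remains open after your proposal, exactly as it is in the paper. Two smaller points: the conjecture quantifies over $n\in\mathbb{Z}^*$, and your argument silently restricts to $n\ge 1$ (the negative case is easy via $L_j^{(-m)}(x)=(-x)^m\tfrac{(j-m)!}{j!}L_{j-m}^{(m)}(x)$, but it should be said); and any honest write-up should note that the three-term recurrence you quote is the recurrence in $j$ at \emph{fixed} parameter $n$, so it relates $L_{j\pm1}^{(n)}(n)$, not the diagonal sequence $L_{j\pm1}^{(j\pm1)}(j\pm1)$ --- you use it correctly, but the notation $l_j$ invites the wrong reading.
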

	\item[2.] Stability estimates. 
	We would like to know if the recovery of $f$ from $If$ is stable. That is, for a suitable choice of Sobolev scale $H^s(\overline{\heis})$ and $H^s(\overline{\heis}\times\mathbb{Q}^+)$ for which 
	\begin{align*}
		\|f\|_{H^s}\leq C \|If\|_{H^{s'}}
	\end{align*}
We expect that, with respect to a natural choice of Sobolev scale based on the intertwining properties above, that $I$ is one  half smoothening. That is $s'=s+\frac{1}{2}$.
	\item Non-Euclidea metrics on the base. 
	The geodesics in \eqref{geodesics1} are determined by lifting the Eudliean metric $g=dx^2+dy^2$  from $\mathbb{R}^2$ to the horizontal distribution of $\overline{\heis}$. What can we say about the X-ray transform associated to the lift of a non-Euclidean metric from $\mathbb{R}^2$ to a sub-Riemannian metric on the Heisenberg group?
	\item Can we obtain a similar explicit spectral decomposition for the X-ray transform on more general Carnot groups such as the Engel group of free Nilpotent groups? Can we state more general two-radius theorems in these cases?
\end{itemize}
\section{Appendix}
The proofs of the following results may be found, for example, in \cite{follandHarmonicAnalysisPhase1989, thangaveluHarmonicAnalysisHeisenberg1998}
\subsection{Entry Functions on the Heisenberg group}
Consider the family of functions, called \textit{matrix coefficients} or \textit{entry functions} defined on the Heisenberg group:
\begin{align}
M^{h}_{jk}(z, t)=
\begin{cases}
\sqrt\frac{k!}{j!} \left(+\sqrt{h}z\right)^{j-k}L_k^{(j-k)}\left(h|z|^2\right)e^{-{h}|z|^2/2}e^{2i{h} t} & j\geq k\\
\sqrt\frac{j!}{k!}\left(-\sqrt{h} \overline{z}\right)^{k-j}L_j^{(k-j)}\left({h}|z|^2\right)e^{-{h}|z|^2/2}e^{2i{h} t} & j \leq k
\end{cases},\label{matrix coefficient}
\end{align}
and $M^{h}_{jk}(z, t)=M^{|{h}|}_{jk}(\overline{z}, -t)$ for ${h}<0$. 

The matrix coefficients are joint eigenfunctions of the left sublaplacian $\mathcal{L}$ and the Reed vector field $T$.
\begin{align}
\begin{cases}
\mathcal{L}M_{jk}^h(z, t)=2|h|\left(1+2j\right)M_{jk}^h(z, t)\\
TM_{jk}^h(z, t)=2ihM_{jk}^h(z, t).
\end{cases}\label{joint}
\end{align}

\begin{lemma}[Useful properties on $M_{jk}^h$]\label{useful}Let $h>0$
	\begin{itemize}
	\item [1)]
	For all $j, k=0, 1, 2, ...$
	\begin{align}
	M_{jk}^h(z, t)&=M_{jk}^1(\sqrt{h}z, ht)\label{dilation}\\
	M_{jk}^{-h}(z, t)&=M_{jk}^h(\overline{z}, -t)\nonumber
	\end{align}
	\item [2)]
	For all $j, j', k, k'=0, 1, 2, ...$
		\begin{align}
	\int_\mathbb{C} M_{jk}^h(z, 0)\overline{M}_{j' k'}^h(z, 0)dz
	=\frac{\pi}{h}\delta_{jj'}\delta_{kk'}. \label{ortho}
	\end{align}
	\item [3)]
	For any $m, n\in\mathbb{Z}^*$ and $j,k=0, 1, 2, ...$
	\begin{align}
	\int_0^{2\pi}M_{jk}^n(e^{ims}, s/2)ds
	&=	2\pi\delta\left(|n|+m(j-k)\right)M_{jk}^n(1, 0). \label{integral}
	\end{align}
	\item [4)] For all $j, k=0, 1, 2, ...$
	\begin{align}
	M_{jk}^h\left((w, s)(z, t)\right)=\sum_{l=0}^\infty M_{jl}^h(z, t)M_{lk}^h(w, s), \quad (w, s), (z, t)\in\heis. \label{product}
	\end{align}
\end{itemize}
\end{lemma}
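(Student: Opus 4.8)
The plan is to dispatch the four assertions in turn, using the explicit formula \eqref{matrix coefficient} directly for parts 1)--3) and the representation-theoretic meaning of $M^h_{jk}$ for part 4).

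Part 1) is pure bookkeeping. Substituting $(\sqrt h z, ht)$ for $(z,t)$ in the $h=1$ instance of \eqref{matrix coefficient} and using $|\sqrt h z|^2 = h|z|^2$ reproduces \eqref{matrix coefficient} for general $h>0$ termwise, in both the $j\ge k$ and $j\le k$ branches; the identity $M^{-h}_{jk}(z,t)=M^h_{jk}(\bar z,-t)$ is simply the definition of $M^h_{jk}$ at negative parameter. Part 3) is the observation that on the unit circle $|z|=1$ the Laguerre and Gaussian factors $L^{(\cdot)}_{\cdot}(h|z|^2)e^{-h|z|^2/2}$ are frozen at their values at $z=1$: for $n>0$ and $z=e^{ims}$, $t=s/2$, the only $s$-dependence in $M^n_{jk}(e^{ims},s/2)$ is the monomial phase $e^{im(j-k)s}$ (this is the same in both branches, since conjugation flips the sign of $k-j$) times $e^{2in(s/2)}=e^{ins}$, so $M^n_{jk}(e^{ims},s/2)=M^n_{jk}(1,0)\,e^{i(m(j-k)+n)s}$ and integration over $[0,2\pi]$ gives $2\pi\delta_0\!\left(m(j-k)+n\right)M^n_{jk}(1,0)$. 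For $n<0$ the reflection identity of part 1) turns the phase into $e^{-i(m(j-k)+|n|)s}$; together with $M^n_{jk}(1,0)=M^{|n|}_{jk}(1,0)$ this is precisely \eqref{integral}.

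Part 2) I would reduce to $h=1$ via the dilation identity and the change of variables $z\mapsto z/\sqrt h$ (Jacobian $h^{-1}$), then integrate in polar coordinates $z=\rho e^{i\phi}$. The angular integral is $\int_0^{2\pi}e^{i(j-k)\phi}\,\overline{e^{i(j'-k')\phi}}\,d\phi$ (reading off the winding numbers from \eqref{matrix coefficient}), which vanishes unless $j-k=j'-k'$; since the two branches of \eqref{matrix coefficient} have winding numbers of opposite sign and agree on the overlap $j=k$, this already kills all cross terms between the branches except on the diagonal. Writing $\alpha:=j-k=j'-k'\ge 0$, the substitution $u=\rho^2$ reduces the radial integral to a constant multiple of $\int_0^\infty u^{\alpha}L^{(\alpha)}_k(u)L^{(\alpha)}_{k'}(u)e^{-u}\,du=\tfrac{(k+\alpha)!}{k!}\,\delta_{kk'}$ (classical Laguerre orthogonality), forcing $k=k'$ and hence $j=j'$; tracking the normalizing factors $\sqrt{k!/j!}$, the power $(\sqrt h)^{\alpha}$ and the Jacobian then collapses everything to $\pi/h$.

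Part 4) is the one step that uses structure rather than calculus, and it is the main obstacle. The clean route is to recall that, with the appropriate index convention, $M^h_{jk}(g)=\langle \pi_h(g)\phi_j,\phi_k\rangle$ is a matrix coefficient of the Schr\"odinger representation $\pi_h$ of $\heis$ on $L^2(\mathbb{R})$ in the Hermite basis $\{\phi_l\}_{l\ge 0}$ --- this identification is the classical It\^o-type computation that produces the Laguerre polynomials in \eqref{matrix coefficient} (see \cite{follandHarmonicAnalysisPhase1989,thangaveluHarmonicAnalysisHeisenberg1998}). Granting it, \eqref{product} is just the homomorphism property $\pi_h(g_1g_2)=\pi_h(g_1)\pi_h(g_2)$ with a resolution of the identity inserted: $\langle\pi_h(g_1g_2)\phi_j,\phi_k\rangle=\sum_l\langle\pi_h(g_2)\phi_j,\phi_l\rangle\langle\pi_h(g_1)\phi_l,\phi_k\rangle$, the series converging since it is the Parseval expansion of the fixed $L^2$ vector $\pi_h(g_2)\phi_j$ in an orthonormal basis (for $h<0$ one first applies part 1) to pass to $|h|$). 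The obstacle is exactly this identification of \eqref{matrix coefficient} with Schr\"odinger matrix coefficients: to avoid representation theory entirely one would have to extract \eqref{product} from a generating-function identity for the Laguerre polynomials, which is elementary but considerably more laborious, so for that step I would simply cite the standard references.
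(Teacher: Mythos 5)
Your proposal is correct. The paper offers no proof of this lemma at all---it defers entirely to \cite{follandHarmonicAnalysisPhase1989, thangaveluHarmonicAnalysisHeisenberg1998}---and your argument (termwise verification of 1), the phase computation for 3), polar coordinates plus Laguerre orthogonality for 2), and the identification of $M^h_{jk}$ with matrix coefficients of the Schr\"odinger representation for 4)) is precisely the standard treatment found in those references, so it is consistent with the route the paper intends.
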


\begin{definition}
	Define the rescaled functions
	\begin{align}
	\psi_{jk}^{n}:=\frac{\sqrt{|n|}}{\pi}M_{jk}^{n} \label{basis}
	\end{align}
	Since $M_{jk}^{n}(z, t)=e^{2in t}M_{jk}^{n}(z, 0)$ we see that $t$ may be taken mod $\pi$. Thus $\psi_{jk}^{n}$ is a function on $L^2(\overline{\heis})$.
\end{definition}
As a consequence of Lemma \ref{useful}, the set $\{\psi_{jk}^n\}$ is orthonormal. In fact, more is true \cite{thangaveluHarmonicAnalysisHeisenberg1998}:


	\begin{theorem}
	The functions $\{\psi_{jk}^{n}: n\in\mathbb{Z}^*, j,k=0,1, 2, ...\}$ are an orthonormal basis for ${^0L^2(\overline{\heis})}$.
\end{theorem}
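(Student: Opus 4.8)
The plan is to separate the two assertions. Orthonormality and the inclusion $\psi_{jk}^n\in{}^0L^2(\overline{\heis})$ are essentially already recorded in the Appendix: from $M_{jk}^n(z,t)=e^{2int}M_{jk}^n(z,0)$ and $n\neq 0$ one gets $\int_0^\pi M_{jk}^n(z,t)\,dt=0$, so $\psi_{jk}^n\in{}^0L^2(\overline{\heis})$; and splitting the $L^2(\overline{\heis})$ inner product into a $t$-integral times a $z$-integral, the identity $\int_0^\pi e^{2i(n-n')t}\,dt=\pi\delta_{nn'}$ separates the representation parameters while \eqref{ortho} handles the $z$-integral, the constant $\sqrt{|n|}/\pi$ in \eqref{basis} being tuned precisely so that the product is $\delta_{nn'}\delta_{jj'}\delta_{kk'}$. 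So the substantive content is completeness.

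For completeness I would first Fourier-expand in the compact central variable: $f(z,t)=\sum_{n\in\mathbb{Z}}f_n(z)e^{2int}$ with $f_n\in L^2(\mathbb{C})$, the convergence being in $L^2$, and the membership $f\in{}^0L^2(\overline{\heis})$ being exactly the vanishing $f_0=0$. This reduces everything to the \emph{slicewise} claim that for each fixed $n\in\mathbb{Z}^*$ the linear span of $\{M_{jk}^n(\cdot,0):j,k\in\mathbb{N}\}$ is dense in $L^2(\mathbb{C})$; granting that, every $f_n(z)e^{2int}$ is an $L^2$-limit of finite combinations of the $\psi_{jk}^n$, and the theorem follows.

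To settle the slicewise claim I would identify $M_{jk}^n(z,0)$, up to a constant, with the matrix coefficient $z\mapsto\langle\pi_{2n}(z,0)h_k,h_j\rangle$ of the Schr\"odinger representation $\pi_{2n}$ of $\heis$ on $L^2(\mathbb{R})$ in the Hermite orthonormal basis $\{h_j\}_{j\geq 0}$, and then use the Fourier--Wigner transform $V_n(g,h)(z)=\langle\pi_{2n}(z,0)g,h\rangle$. A short computation yields the Moyal identity $\langle V_n(g_1,h_1),V_n(g_2,h_2)\rangle_{L^2(\mathbb{C})}=c|n|^{-1}\langle g_1,g_2\rangle\,\overline{\langle h_1,h_2\rangle}$, which is Schur orthogonality for $\pi_{2n}$. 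Since $\{h_j\}$ is complete in $L^2(\mathbb{R})$, the rank-one operators $u\mapsto\langle u,h_k\rangle h_j$ form an orthonormal basis of $\mathrm{HS}(L^2(\mathbb{R}))$, and the Moyal identity says that $A\mapsto\bigl(z\mapsto\mathrm{tr}(A^*\pi_{2n}(z,0))\bigr)$ is, after normalization, an isometry $\mathrm{HS}(L^2(\mathbb{R}))\to L^2(\mathbb{C})$ carrying that basis to a constant multiple of $\{M_{jk}^n(\cdot,0)\}$; if it is onto, the slicewise claim — indeed the full orthonormal-basis statement — follows at once.

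I expect the main obstacle to be exactly the surjectivity of that isometry, i.e.\ showing that any $\phi\in L^2(\mathbb{C})$ orthogonal to all $M_{jk}^n(\cdot,0)$ must vanish. The route I would take: for $\phi\in L^1\cap L^2(\mathbb{C})$ orthogonal to every $V_n(g,h)$, the operator $\pi_{2n}(\phi)=\int_{\mathbb{C}}\phi(z)\pi_{2n}(z,0)\,dz$ annihilates $L^2(\mathbb{R})$; irreducibility of $\pi_{2n}$ makes the span of $\{\pi_{2n}(z,0)\}$ weakly dense in $\mathcal{B}(L^2(\mathbb{R}))$, forcing $\phi=0$, and a standard approximation argument removes the $L^1$ hypothesis. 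This is precisely the Plancherel theorem for $\heis$ restricted to a single representation, so in the actual write-up I would either reproduce the short Moyal computation and cite the Stone--von Neumann/Plancherel package, or simply invoke completeness of the special Hermite (Laguerre) functions in $L^2(\mathbb{C})$ from \cite{follandHarmonicAnalysisPhase1989,thangaveluHarmonicAnalysisHeisenberg1998}.
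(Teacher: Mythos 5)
The paper offers no proof of this theorem---it is quoted from \cite{follandHarmonicAnalysisPhase1989,thangaveluHarmonicAnalysisHeisenberg1998}---and your argument is exactly the standard one found there: orthonormality from \eqref{ortho} together with the $t$-integral $\int_0^\pi e^{2i(n-n')t}\,dt=\pi\delta_{nn'}$, reduction of completeness to the central Fourier decomposition $f=\sum_n f_n(z)e^{2int}$ (with $f_0=0$ characterizing ${}^0L^2(\overline{\heis})$), and then completeness of the special Hermite functions $M_{jk}^n(\cdot,0)$ in $L^2(\mathbb{C})$ via the Moyal identity for the Fourier--Wigner transform. The one step you state too loosely is the injectivity of $\phi\mapsto\pi_{2n}(\phi)$: weak density of $\mathrm{span}\{\pi_{2n}(z,0)\}$ in $\mathcal{B}(L^2(\mathbb{R}))$ shows only that the map $A\mapsto\mathrm{tr}(A^*\pi_{2n}(\phi))$ has dense domain of ``test operators,'' not that $\pi_{2n}(\phi)=0$ forces $\phi=0$; the clean route is the Plancherel identity for the Weyl transform, $\|\pi_{2n}(\phi)\|_{\mathrm{HS}}^2=c\,|n|^{-1}\|\phi\|_{L^2(\mathbb{C})}^2$, which is exactly the citable fact you fall back on in your last sentence, so the proof is correct as you intend to write it.
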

\subsection{Special Functions}
\begin{align}
\sum_{j=0}^\infty t^j L_j^{(\alpha)}(x)=\frac{1}{\left(1-t\right)^{\alpha+1}}e^{-xt/(1-t)}, \quad |t|<1.\label{laguerre}
\end{align}
\begin{align}
	J_0(r)=\frac{1}{2\pi}\int_0^{2\pi}e^{ir\cos\theta}d\theta \label{bessel}
\end{align}
\section{Acknowledgments}
Support from the Research Project Grant RPG-2020-037: \textit{Quantum Limits for Sub-elliptic Operators} (PI: V\'{e}ronique Fischer; CoPI: Clotilde Fermanian-Kammerer) is gratefully acknowledged. 
\printbibliography

@article{agranovskyInjectivityPompeiuTransform1994,
	title = {Injectivity of the {{Pompeiu}} Transform in the {{Heisenberg}} Group},
	author = {Agranovsky, Mark and Berenstein, Carlos and Chang, Der -Chen and Pascuas, Daniel},
	date = {1994-12},
	journaltitle = {Journal d'Analyse Mathématique},
	shortjournal = {J. Anal. Math.},
	volume = {63},
	number = {1},
	pages = {131--173},
	issn = {0021-7670, 1565-8538},
	doi = {10.1007/BF03008422},
	langid = {english},
	file = {/Users/stevenflynn/Zotero/storage/A5QXKDI3/Agranovsky et al. - 1994 - Injectivity of the Pompeiu transform in the Heisen.pdf}
}

@article{delsarteLecturesTopicsMean,
	title = {Lectures on {{Topics}} in {{Mean Periodic Functions And The Two-Radius Theorem}}},
	author = {Delsarte, J},
	langid = {english},
	file = {/Users/stevenflynn/Zotero/storage/PUUZTQZ9/Delsarte - Lectures on Topics in Mean Periodic Functions And .pdf}
}

@article{flynnInjectivityHeisenbergXray2021,
	title = {Injectivity of the {{Heisenberg X-ray}} Transform},
	author = {Flynn, Steven},
	date = {2021-03-01},
	journaltitle = {Journal of Functional Analysis},
	shortjournal = {Journal of Functional Analysis},
	volume = {280},
	number = {5},
	pages = {108886},
	issn = {0022-1236},
	doi = {10.1016/j.jfa.2020.108886},
	abstract = {We initiate the study of X-ray tomography on sub-Riemannian manifolds, for which the Heisenberg group exhibits the simplest nontrivial example. With the language of the group Fourier transform, we prove an operator-valued incarnation of the Fourier Slice Theorem, and apply this new tool to show that a sufficiently regular function on the Heisenberg group is determined by its line integrals over sub-Riemannian geodesics. We also consider the family of taming metrics gϵ approximating the sub-Riemannian metric, and show that the associated X-ray transform is injective for all ϵ{$>$}0. This result gives a concrete example of an injective X-ray transform in a geometry with an abundance of conjugate points.},
	langid = {english},
	keywords = {Heisenberg,Inverse problems,Sub-Riemannian geometry,X-ray transforms with conjugate points},
	file = {/Users/stevenflynn/Zotero/storage/RMFX7FIE/Flynn - 2021 - Injectivity of the Heisenberg X-ray transform.pdf;/Users/stevenflynn/Zotero/storage/5UFM8D2U/S0022123620304298.html}
}

@book{follandHarmonicAnalysisPhase1989,
	title = {Harmonic Analysis in Phase Space},
	author = {Folland, G. B.},
	date = {1989},
	series = {The {{Annals}} of Mathematics Studies},
	number = {no. 122},
	publisher = {{Princeton University Press}},
	location = {{Princeton, N.J}},
	isbn = {978-0-691-08527-2 978-0-691-08528-9},
	pagetotal = {277},
	keywords = {Harmonic analysis,Phase space (Statistical physics)},
	file = {/Users/stevenflynn/Zotero/storage/JM7Y2GGG/[Gerald_B._Folland]_Harmonic_analysis_in_phase_spa.pdf}
}

@article{ilmavirtaTorusComputedTomography2020,
	title = {Torus Computed Tomography},
	author = {Ilmavirta, Joonas and Koskela, Olli and Railo, Jesse},
	date = {2020-01},
	journaltitle = {SIAM Journal on Applied Mathematics},
	shortjournal = {SIAM J. Appl. Math.},
	volume = {80},
	number = {4},
	eprint = {1906.05046},
	eprinttype = {arxiv},
	pages = {1947--1976},
	issn = {0036-1399, 1095-712X},
	doi = {10.1137/19M1268070},
	abstract = {We present a new computed tomography (CT) method for inverting the Radon transform in 2D. The idea relies on the geometry of the flat torus, hence we call the new method Torus CT. We prove new inversion formulas for integrable functions, solve a minimization problem associated to Tikhonov regularization in Sobolev spaces and prove that the solution operator provides an admissible regularization strategy with a quantitative stability estimate. This regularization is a simple post-processing low-pass filter for the Fourier series of a phantom. We also study the adjoint and the normal operator of the X-ray transform on the flat torus. The X-ray transform is unitary on the flat torus. We have implemented the Torus CT method using Matlab and tested it with simulated data with promising results. The inversion method is meshless in the sense that it gives out a closed form function that can be evaluated at any point of interest.},
	keywords = {{65R10, 65R32, 44A12, 42B05, 46F12},Mathematics - Functional Analysis,Mathematics - Numerical Analysis},
	file = {/Users/stevenflynn/Zotero/storage/94MMIYUP/Ilmavirta et al. - 2020 - Torus computed tomography.pdf;/Users/stevenflynn/Zotero/storage/IFIX4U6L/1906.html}
}

@article{mishraRangeCharacterizationsSingular2021,
	title = {Range Characterizations and {{Singular Value Decomposition}} of the Geodesic {{X-ray}} Transform on Disks of Constant Curvature},
	author = {Mishra, Rohit Kumar and Monard, François},
	date = {2021-07-14},
	journaltitle = {Journal of Spectral Theory},
	shortjournal = {J. Spectr. Theory},
	volume = {11},
	number = {3},
	pages = {1005--1041},
	issn = {1664-039X},
	doi = {10.4171/JST/364},
	abstract = {For a one-parameter family of simple metrics of constant curvature (4  for   2 . 1; 1/) on the unit disk M , we first make explicit the Pestov–Uhlmann range characterization of the geodesic X-ray transform, by constructing a basis of functions making up its range and co-kernel. Such a range characterization also translates into moment conditions à la Helgason–Ludwig or Gel’fand–Graev. We then derive an explicit Singular Value Decomposition for the geodesic X-ray transform. Computations dictate a specific choice of weighted L2 L2 setting which is equivalent to the L2.M; dVol / ! L2.@CSM; d †2/ one for any   2 . 1; 1/.},
	langid = {english},
	file = {/Users/stevenflynn/Zotero/storage/PKGX8U3S/Mishra and Monard - 2021 - Range characterizations and Singular Value Decompo.pdf}
}

@book{montgomeryTourSubriemannianGeometries2006,
	title = {A {{Tour}} of {{Subriemannian Geometries}}, {{Their Geodesics}} and {{Applications}}},
	author = {Montgomery, Richard},
	date = {2006-08-08},
	series = {Mathematical {{Surveys}} and {{Monographs}}},
	volume = {91},
	publisher = {{American Mathematical Society}},
	location = {{Providence, Rhode Island}},
	doi = {10.1090/surv/091},
	abstract = {A subriemannian, or Carnot-Carath´eodory, geometry is a nonintegrable distribution, or subbundle of the tangent bundle of a manifold, which is endowed with an inner product. Part I presents the basic theory and examples, focussing on the geodesics. Chapters explaining the ideas of Cartan and Gromov are included. Part II presents applications to physics. These include Berry’s quantum phase and an explanation of how a falling cat rights herself to land on her feet.},
	isbn = {978-0-8218-4165-5 978-1-4704-1318-7},
	langid = {english},
	file = {/Users/stevenflynn/Zotero/storage/AS98YE9L/Montgomery - 2006 - A Tour of Subriemannian Geometries, Their Geodesic.pdf}
}

@book{thangaveluHarmonicAnalysisHeisenberg1998,
	title = {Harmonic {{Analysis}} on the {{Heisenberg Group}}},
	author = {Thangavelu, Sundaram},
	date = {1998},
	publisher = {{Birkhäuser Boston}},
	location = {{Boston, MA}},
	doi = {10.1007/978-1-4612-1772-5},
	isbn = {978-1-4612-7275-5 978-1-4612-1772-5},
	langid = {english},
	file = {/Users/stevenflynn/Zotero/storage/8GW63BHE/Thangavelu - 1998 - Harmonic Analysis on the Heisenberg Group.pdf}
}

\end{document}